\newtheorem{theorem}{Theorem}
\newtheorem{lemma}{Lemma}[subsection]
\newtheorem{prop}{Proposition}[subsection]
\newtheorem{case}{Case}
\DeclareMathOperator{\Sylv}{Sylv}
\title{The Lower Central Series of a Quotient of a Free Algebra}
\author{\small Lev Kendrick, Gus Lonergan}
\date{}
\begin{document}

\maketitle
\thispagestyle{empty}
\begin{abstract}
Let $L_i(R)$ denote the $i^{\text{th}}$ term of the lower central series of an associative algebra $R$, and let $B_i(R)=L_i(R)/L_{i+1}(R)$.  We show that $B_2(\mathbb{C}\langle x, y\rangle/ P)\cong \Omega^2((\mathbb{C}\langle x, y\rangle/ P)_{ab})$, for all homogeneous or quasihomogeneous $P$ with square-free abelianization. Our approach generalizes that of Balagovic and Balasubramanian in (M. Balagovic, A. Balasubramanian, 2011), which in turn developed from that of Dobrovolska, Kim, and Ma (G. Dobrowolska, J. Kim, X. Ma, 2008). We also use ideas of Feigin and Shoikhet (B. Feign, B. Shoikhet, 2006), who initiated the study of the groups $B_i(R)$.
\end{abstract}

\setcounter{page}{1}
\section{Introduction}\label{sec:intro}

For an associative algebra $A$, we may consider the lower central series $L_i(A)$ given by $L_1(A)=A$ and $L_{i+1}(A)=[A,L_i(A)]$, and its subquotients $B_i(A)=L_i(A)/L_{i+1}(A)$. We would like to understand the structure of the spaces $B_i(A)$.

The spaces $B_i(A)$ were first studied by Feigin and Shoikhet (B. Feigin, B. Shoikhet, 2006), where they consider the case $A=A_n:=\mathbb{C}\langle x_1,\ldots,x_n\rangle$, the free algebra over $\mathbb{C}$. In particular they construct an isomorphism from $B_2(A_n)$ to the space of closed positive even differential forms on $\mathbb{A}^n_\mathbb{C}=\operatorname{Spec}((A^n_\mathbb{C})_{ab})$. Dobrovolska, Kim, and Ma (G. Dobrowolska, J. Kim, X. Ma, 2008) gave a new calculation of the Hilbert-Poincar\'e series of $B_2(A_n)$, establishing several general lemmas on which we rely heavily. In (M. Balagovic, A. Balasubramanian, 2011) Balagovic and Balasubramanian studied $B_2(A_n/P)$ for generic homogeneous $P$ and $n=2,3$, showing that it is isomorphic to the space of closed positive even differential forms on $\operatorname{Spec}((A_n/P)_{ab})$.

In this paper, we extend the result of Balagovic and Balasubramanian (for $n=2$) to any quasihomogeneous $P$ with square-free abelianization. Indeed recall that for any associative algebra $A$ we may define its abelianization as the maximal abelian quotient $A_{ab}$ of $A$, and for any $P\in A$ define its abelianization as its image $P_{ab}$ in $A_{ab}$. Recall also that $P\in \mathbb{C}\langle x,y\rangle\cong A_2$ is said to be quasihomogeneous if we can assign (positive integer) degrees $s,r$ to $x,y$ respectively such that $P$ is homogeneous, say of degree $d$, for the corresponding $\mathbb{Z}^+$-grading on $\mathbb{C}\langle x,y\rangle$. Thus we fix such $r,s$, without loss of generality coprime, and consider the corresponding grading on $\mathbb{C}\langle x,y\rangle$; unless otherwise stated, degrees (and related notions such as homogeneity, Hilbert-Poincar\'e series) are with respect to this grading.

Then we prove the following:

\begin{theorem}\label{thm:main}
Let $P\in\mathbb{C}\langle x, y\rangle$ be homogeneous, and such that the abelianization of $P$ is square-free. Then $B_2(\mathbb{C}\langle x, y\rangle/ P)$ is isomorphic to $\Omega^2((A_2/P)_{ab})$. In particular, its Hilbert-Poincar\'e series $HP(B_2(\mathbb{C}\langle x, y\rangle/ P))(t)$ is equal to $\frac{(t^{s}-t^{d})(t^{r}-t^{d})}{(1-t^{s})(1-t^{r})}$.\end{theorem}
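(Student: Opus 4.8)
The plan is to follow the strategy of Balagovic–Balasubramanian and Dobrovolska–Kim–Ma, adapting it to the quasihomogeneous, square-free setting. Write $R = \mathbb{C}\langle x, y\rangle/P$ and $R_{ab} = (A_2/P)_{ab} = \mathbb{C}[x,y]/(\bar P)$, where $\bar P = P_{ab}$ is square-free. The Feigin–Shoikhet map (and its variant in \cite{newLCS}) gives a natural surjection $\Omega^{\mathrm{even},+}(R_{ab}) \twoheadrightarrow B_2(R)$; in degree of forms, the piece landing in $B_2$ is $\Omega^2(R_{ab})$ together with a contribution from $\Omega^0$. The first step is to pin down exactly which map we use: I expect $B_2(R)$ to be a quotient of $\Omega^2(R_{ab})$ (the $\Omega^0$ contribution, i.e. functions, should vanish because $R$ is generated in positive degree and one checks $L_2/L_3$ receives nothing from constants — this is where \cite{oldLCS}'s general lemmas enter). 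So the content of the theorem is that this surjection $\Omega^2(R_{ab}) \twoheadrightarrow B_2(R)$ is an \emph{isomorphism}.

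**Next I would** compute the Hilbert–Poincaré series of $\Omega^2(R_{ab})$ directly. With $\deg x = s$, $\deg y = r$, and $\deg \bar P = d$, we have $HP(R_{ab})(t) = \frac{1 - t^d}{(1-t^s)(1-t^r)}$ since $\bar P$ is a nonzerodivisor in $\mathbb{C}[x,y]$ (it is square-free, hence in particular nonzero, and $\mathbb{C}[x,y]$ is a domain). The module $\Omega^1(R_{ab})$ is presented by $R_{ab}\,dx \oplus R_{ab}\,dy$ modulo the single relation $d\bar P = \partial_x\bar P\, dx + \partial_y \bar P\, dy$; since $\bar P$ is square-free the ideal $(\partial_x \bar P, \partial_y \bar P)$ defines a finite (zero-dimensional) scheme, so $d\bar P$ is "as regular as possible" and one gets $HP(\Omega^1(R_{ab}))$, and then $\Omega^2(R_{ab}) = \wedge^2 \Omega^1(R_{ab})$, which for this two-variable complete intersection picture works out to $HP(\Omega^2(R_{ab}))(t) = \frac{(t^s - t^d)(t^r - t^d)}{(1-t^s)(1-t^r)}$ — matching the claimed formula. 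This is a routine but important bookkeeping computation; the square-freeness of $\bar P$ is precisely what makes the Koszul-type resolution of $\Omega^\bullet$ behave well.

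**The main obstacle** is the reverse inequality: showing $HP(B_2(R))(t) \geq HP(\Omega^2(R_{ab}))(t)$ coefficientwise, equivalently that the surjection has no kernel. I would do this by producing enough linearly independent elements of $L_2(R)/L_3(R)$, working modulo $L_3$ inside $R$. Concretely, pull back to the free algebra: lift a monomial basis for $\Omega^2(R_{ab})$ to explicit elements of $L_2(A_2) \subset A_2$, reduce modulo the ideal $(P)$ and modulo $L_3(A_2/P)$, and show the images stay independent. The delicate point — exactly as in \cite{newLCS} — is controlling the interaction between the ideal generated by $P$ and the lower central series: one must show that $(P) \cap L_2(A_2) \subseteq (P)\cdot A_2 \cdot L_1 + L_3(A_2) + \ldots$ does not collapse more than the single relation $d\bar P$ does in the commutative picture. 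Because $P$ is only quasihomogeneous (not homogeneous in the standard grading), I would run all Hilbert-series arguments with respect to the fixed $(s,r)$-grading throughout, and use quasihomogeneity to get an Euler-type identity $s\,x\,\partial_x \bar P + r\, y\, \partial_y \bar P = d\,\bar P$, which lets one handle the "extra" constant/function contributions and confirm they vanish. I expect the bulk of the work, and the place where square-freeness is used a second time, to be in this independence argument — essentially a careful diagram chase through the Feigin–Shoikhet construction combined with the case analysis on the structure of $\bar P$ (distinct irreducible factors) inherited from \cite{newLCS}.
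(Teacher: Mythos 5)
Your overall architecture (a surjection between $B_2(R)$ and $\Omega^2(R_{ab})$ plus a coefficientwise Hilbert--Poincar\'e inequality in the other direction) matches the paper's, and your computation of $HP(\Omega^2(R_{ab}))$ via the Euler identity and square-freeness essentially reproduces the computation in the introduction. But there are two genuine problems. First, you have the Feigin--Shoikhet map pointing the wrong way: it is a map $B_2(A_n)\to\Omega^{ev,+}((A_n)_{ab})$, and the paper constructs the induced surjection $\phi:B_2(R)\to\Omega^2(R_{ab})$ by checking that $\psi$ sends $\langle P\rangle\cap L_2(\mathbb{C}\langle x,y\rangle)$ into the ideal generated by $P_{ab}$ and $dP_{ab}$. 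Your claimed surjection $\Omega^2(R_{ab})\twoheadrightarrow B_2(R)$ would instead require the opposite containment --- that every form in that ideal is realized by an element of $\langle P\rangle\cap L_2(A_2)+L_3(A_2)$ --- which you never verify and which is not what \cite{FS} or \cite{newLCS} hand you.

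Second, and more seriously, because your surjection points from $\Omega^2$ to $B_2$, the inequality you still owe is a \emph{lower} bound on $\dim B_2[m]$, i.e.\ exhibiting linearly independent elements of $L_2(R)/L_3(R)$; your plan for this (``lift a monomial basis and show the images stay independent'') has no mechanism behind it, since proving independence in $B_2(R)$ requires mapping to something computable --- which is exactly the map you would have to build in the other direction anyway. The paper avoids this circularity by orienting the surjection as $B_2(R)\to\Omega^2(R_{ab})$ (so the lower bound comes for free) and then proving the \emph{upper} bound $HP(B_2)\le\frac{(t^{s}-t^{d})(t^{r}-t^{d})}{(1-t^{s})(1-t^{r})}$ by a concrete linear-algebra argument: $B_2$ is spanned by the brackets $[x^i,y^j]$ (via Lemma \ref{lem:b2rels}), the relation $P$ induces explicit linear relations among them (Proposition \ref{prop:bracketdep}), and the resulting relation matrices are shown to have maximal rank by reduction to Sylvester matrices, with square-freeness of $P_{ab}$ entering precisely through $\operatorname{Res}(h,h')\neq0$ for $h=\sum_k a_kx^{n-k}$ (Lemma \ref{lem:sqfree}), split into cases according to $(u,v)$ and $(p,q)$. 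This resultant computation is the technical heart of the proof and is entirely absent from your proposal.
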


This result is proved in the case of generic homogeneous $P$ in (M. Balagovic, A. Balasubramanian, 2011); thus even in the case of homogeneous $P$, this is a new result. Note that $\Omega^i((A_2/P)_{ab})=0$ for $i\geq 3$, so in particular $\Omega^2((A_2/P)_{ab})$ is precisely the space of closed positive even differential forms on $\operatorname{Spec}((A_2/P)_{ab})$. One may easily check that \begin{align*} \Omega^2((A_2/P)_{ab})\cong \mathbb{C}[x,y]\Bigg/\left(P_{ab},\frac{\partial P_{ab}}{\partial x},\frac{\partial P_{ab}}{\partial y}\right)dx\wedge dy.\end{align*} We have \begin{align}\label{eqn1} P_{ab}=d^{-1}\left(sx\frac{\partial P_{ab}}{\partial x}+ry\frac{\partial P_{ab}}{\partial y}\right),\end{align}
so that \begin{align*}\Omega^2((A_2/P)_{ab})\cong \mathbb{C}[x,y]\Bigg/\left(\frac{\partial P_{ab}}{\partial x},\frac{\partial P_{ab}}{\partial y}\right)dx\wedge dy.\end{align*} Since $\mathbb{C}[x,y]$ is a UFD, the sequence $(\partial P_{ab}/\partial x,\partial P_{ab}/\partial y)$ is regular if and only $\partial P_{ab}/\partial x,\partial P_{ab}/\partial y$ have no common prime factor. By equation (\ref{eqn1}), any such common factor $f$ would also divide $P_{ab}$, so that $P_{ab}=fg$ for some $g$ not divisible by $f$ (since $P_{ab}$ is assumed to be square-free). But then $f$ must divide $\partial P_{ab}/\partial x=f\partial g/\partial x+g\partial f/\partial x$ and therefore must also divide $\partial f/\partial x$, which is impossible for reasons of degree. So the sequence $(\partial P_{ab}/\partial x,\partial P_{ab}/\partial y)$ is regular, and \begin{align}\label{eqn2} HP(\mathbb{C}[x,y]/(\partial P_{ab}/\partial x,\partial P_{ab}/\partial y)dx\wedge dy) = \frac{(1-t^{d-s})(1-t^{d-r})}{(1-t^{s})(1-t^{r})}t^st^r = \frac{(t^{s}-t^{d})(t^{r}-t^{d})}{(1-t^{s})(1-t^{r})}.\end{align}
which explains the second sentence in Theorem \ref{thm:main} above. 

In Section \ref{sec:low} we adapt the map $B_2(A_n)\to \Omega^{\operatorname{even}}((A_n)_{ab})$ given (B. Feigin, B. Shoikhet, 2006) to construct a graded homomorphism $\phi:B_2(\mathbb{C}\langle x, y\rangle/ P)\to \Omega^2((A_2/P)_{ab})\cong \mathbb{C}[x,y]/(\partial P_{ab}/\partial x,\partial P_{ab}/\partial y)dx\wedge dy$, and show that it is surjective. This shows that \begin{align}\label{eqn3}HP(\mathbb{C}[x,y]/(\partial P_{ab}/\partial x,\partial P_{ab}/\partial y)dx\wedge dy)\leq HP(B_2(\mathbb{C}\langle x, y\rangle/ P)).\end{align}
with equality holding if and only if $\phi$ is an isomorphism. (Here given two power series $f(t),g(t)$ with integer coefficients we write $f\leq g$ to mean that for all $n$, the coefficient of $t^n$ in $f$ is less than or equal to the coefficient of $t^n$ in $g$.)

In Section \ref{sec:prelims} we adapt some results from (G. Dobrowolska, J. Kim, X. Ma, 2008) and (M. Balagovic, A. Balasubramanian, 2011) to the present situation, and use them in Section \ref{sec:series} to prove that \begin{align}\label{eqn4}HP(B_2(\mathbb{C}\langle x, y\rangle/ P))\leq \frac{(t^{s}-t^{d})(t^{r}-t^{d})}{(1-t^{s})(1-t^{r})}.\end{align}
Combining \ref{eqn2}, \ref{eqn3}, \ref{eqn4} shows that equality holds in \ref{eqn3}, \ref{eqn4}, and so the map $\phi$ must be an isomorphism; this is Theorem \ref{thm:main}.


\section{The Map $\phi$}\label{sec:low}

We follow (B. Feigin, B. Shoikhet, 2006). Let $(R,.,d)$ be a DG-algebra and let $R^{ev}$ be its even part; consider the bilinear operation $\star$ on $R^{ev}$ given by $a\star b=a.b+\frac{1}{2}da.db$. Simple calculations give: 

\begin{itemize}\item $(R^{ev},\star)$ is an associative algebra, with a decreasing filtration given by $F_i(R^{ev})=\bigoplus_{j\geq i}R^{2j}$.\
\item If $R^{ev}$ is central inside $(R,.)$ then $L_3((R^{ev},\star))=0$.\
\item If more specifically $(R,.)$ is a superalgebra (compatible with $d$) then we have $a\star b-b\star a =da.db$.\end{itemize}
It follows that the algebra homomorphism $A_n=\mathbb{C}\langle x_1,\dots,x_n\rangle\to (\Omega^{ev}(\mathbb{C}[x_1,\ldots,x_n]),\star)$ sending $x_i\mapsto x_i$ for each $i$ factors through $A_n/L_3(A_n)$, and we get by restriction a map $\phi_n: B_2(A_n)\to \Omega^{ev,+}(\mathbb{C}[x_1,\ldots,x_n])$. In (B. Feigin, B. Shoikhet, 2006), the following is shown:

\begin{lemma}\label{FSlem}$\phi_n: B_2(A_n)\to \Omega^{ev,+}(\mathbb{C}[x_1,\ldots,x_n])$ is injective, with image precisely those forms in $\Omega^{ev,+}(\mathbb{C}[x_1,\ldots,x_n])$ which are closed.\end{lemma}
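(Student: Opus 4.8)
The plan is to realize $\phi_n$ via an explicit reconstruction formula, following the argument of Feigin–Shoikhet. First I would make the map concrete: under $\phi_n$, a product $x_{i_1}\cdots x_{i_k}$ maps into $\Omega^{ev}(\mathbb{C}[x_1,\dots,x_n])$ by the $\star$-product $x_{i_1}\star\cdots\star x_{i_k}$, which expands as a sum over ways of pairing up consecutive blocks of variables into factors $\tfrac12\,dx_a\,dx_b$; the degree-zero (i.e. $\Omega^0$) component is just the abelianization, and the degree-$2m$ component is a signed sum of terms $\prod (\text{monomial})\, dx_{a_1}dx_{b_1}\cdots dx_{a_m}dx_{b_m}$. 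Restricting to $B_2(A_n)=L_2/L_3$ kills the $\Omega^0$ part (since $L_2$ dies in the abelianization) and lands in $\Omega^{ev,+}$, the forms of strictly positive even degree.

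The second step is surjectivity onto closed forms. The key observation is that any closed form in $\Omega^{ev,+}$ is, by the (polynomial) Poincaré lemma, exact: $\omega = d\eta$ for some $\eta\in\Omega^{odd}$. One then builds a preimage in $B_2(A_n)$ directly from $\eta$ by replacing wedge products with $\star$-products and $d$-differentials appropriately — concretely, writing $\eta$ as a sum of terms $f\,dg_1\cdots dg_{2j-1}$ with $f,g_i$ polynomials, a suitable noncommutative lift (e.g. symmetrized products of the $g_i$ together with commutators encoding the $d$'s) maps under $\phi_n$ to $d\eta=\omega$ modulo lower-order corrections, which one cleans up by downward induction on the filtration $F_i$. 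That $\phi_n$ lands among \emph{closed} forms follows because every element of the image is, up to the filtration, a $\star$-exact-type expression; more directly, one checks $d(x_{i_1}\star\cdots\star x_{i_k}) $ lies in the appropriate ideal, or simply notes that the top-degree component of $\phi_n$ of a commutator $[a,b]$ is $da\wedge db$ which is exact, and closedness propagates.

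The third and genuinely hard step is injectivity. Here I would argue by induction on the filtration degree. Suppose $w\in L_2(A_n)$ has $\phi_n(w)=0$; I want $w\in L_3(A_n)$. Look at the lowest-degree nonzero component of the would-be image before restriction: the leading term of $\phi_n$ on $L_2$ in $\Omega^2$ is essentially the "abelianized commutator" map $[a,b]\mapsto d\bar a\wedge d\bar b$, and one must show its kernel is controlled. This is exactly where \cite{oldLCS}-type lemmas on the structure of $L_2/L_3$ enter — identifying $B_2(A_n)$ with a space of Kähler differentials / a known Hilbert series — so that a dimension count forces $\phi_n$ to be injective once it is surjective onto the closed forms, whose Hilbert series is computable. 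I expect the main obstacle to be making the reconstruction map in step two genuinely well-defined on $B_2$ rather than just on $A_n$, i.e. checking it respects the relations $L_3=0$ and does not depend on the choice of $\eta$ with $d\eta=\omega$; the cleanest route is probably to avoid an explicit section altogether and instead combine surjectivity onto closed forms with the independently known Hilbert–Poincaré series of $B_2(A_n)$ from \cite{oldLCS}, deducing injectivity by comparing graded dimensions degree by degree.
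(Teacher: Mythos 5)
First, a point of comparison that matters: the paper does not prove this statement at all. Lemma \ref{FSlem} is quoted as a known result of Feigin and Shoikhet (\cite{FS}), and the surrounding text only sets up the $\star$-product formalism needed to state it and to record the three bulleted properties used later. So there is no in-paper argument to measure yours against; what you have written is an outline of the original \cite{FS} proof, and it must be judged on its own.

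Your first step (the explicit description of the map, the fact that the degree-zero component of $A_n\to(\Omega^{ev},\star)$ is the abelianization so that $L_2$ lands in $\Omega^{ev,+}$, and well-definedness on $B_2$ via $L_3((R^{ev},\star))=0$) is correct and matches the paper's setup. Closedness of the image is also essentially right, though your phrasing ``the top-degree component of $\phi_n([a,b])$ is $da\wedge db$, and closedness propagates'' undersells the cleaner fact: writing $\psi$ for the algebra map, one has $\psi(a)\star\psi(b)-\psi(b)\star\psi(a)=d\psi(a).d\psi(b)=d\bigl(\psi(a).d\psi(b)\bigr)$ exactly, so the image of every commutator is exact in every degree, not just the top one. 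The genuine gaps are in your last two steps. Surjectivity onto closed (equivalently, by the Poincar\'e lemma, exact) forms is asserted via ``a suitable noncommutative lift \dots\ modulo lower-order corrections, which one cleans up by downward induction on the filtration''; this is the right shape of argument, but none of it is carried out, and the correction terms are precisely where the work lies. More seriously, injectivity is not proved at all: your primary suggestion (controlling the kernel of $[a,b]\mapsto d\bar a\wedge d\bar b$) is left undeveloped, and your fallback --- comparing Hilbert--Poincar\'e series with the dimension count of closed forms, citing \cite{oldLCS} --- risks circularity, since the computation of $HP(B_2(A_n))$ you would import must itself be established without appeal to Lemma \ref{FSlem}. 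If you intend the dimension-count route, you need an independent upper bound on $\dim B_2(A_n)[m]$ in each degree (for instance via the spanning-set relations of Lemma \ref{lem:b2rels}, which is how the relevant part of \cite{oldLCS} proceeds) matching the dimension of the space of closed positive even forms. As written, the proposal is a plausible roadmap rather than a proof.
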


We would like to produce analogous maps $\phi: B_2(A)\to \Omega^{ev,+}(A_{ab})$ for more general associative algebras $A$. In the case $A=\mathbb{C}\langle x,y\rangle/P$ we have the following construction.

Consider the algebra homomorphism $\psi:\mathbb{C}\langle x,y\rangle\to (\Omega^{ev}(\mathbb{C}[x,y]),\star)$ sending $x\mapsto x,y\mapsto y$ as above. Write $\Omega^{ev}(\mathbb{C}[x,y])=\mathbb{C}[x,y]\oplus \Omega^2(\mathbb{C}[x,y])$ and consider the corresponding decomposition $\psi=\psi_0+\psi_2$. One may easily check (for instance by induction on the length of a monomial in $\mathbb{C}\langle x,y\rangle$) that $\psi_0$ is the abelianization map.

Let's compute the image under $\psi$ of $\langle P\rangle\cap L_2(\mathbb{C}\langle x,y\rangle)$. Take an element of $\langle P\rangle\cap L_2(\mathbb{C}\langle x,y\rangle)$, and write it as $\sum_ia^iPb^i\in L_2(\mathbb{C}\langle x,y\rangle)$ for some $a^i,b^i\in\mathbb{C}\langle x,y\rangle$. Then its abelianization $\sum_ia^i_{ab}b^i_{ab}P_{ab}$ is equal to zero, and so $\sum_ia^i_{ab}b^i_{ab}=0$, since $P_{ab}$ is square-free and in particular nonzero. Dropping the subscript $_{ab}$, we compute: \begin{align*}\psi(\sum_ia^iPb^i) & =\sum_i(a^i+\psi_2(a^i))\star(P+\psi_2(P))\star(b^i+\psi_2(b^i))\\
& = \sum_i(a^i\star P\star b^i+\psi_2(a^i)\star P\star b^i+a^i\star \psi_2(P)\star b^i+a^i\star P\star \psi_2(b^i))\\
& = \sum_i(a^i\star P\star b^i+P.(a^i.\psi_2(b^i)+b^i.\psi_2(a^i))+a^i.b^i.\psi_2(P))\\
& = \sum_i(a^i\star P\star b^i+P.(a^i.\psi_2(b^i)+b^i.\psi_2(a^i)))\\
& = \sum_i(a^i.b^i.P +\frac{1}{2}(P.da^i.db^i+a^i.dP.db^i+b^i.da^i.dP)\\
& ~~~~~~~~~~~~~~~~~~~~~~~~~~~~~~~~~~~~~~~  +P.(a^i.\psi_2(b^i)+b^i.\psi_2(a^i)))\\
& = \sum_i(P.(a^i.\psi_2(b^i)+b^i.\psi_2(a^i)+\frac{1}{2}da^i.db^i) + \frac{1}{2}dP.(a^i.db^i-b^i.da^i))\end{align*}
We have used the symbol $.$ in place of $\wedge$ for consistency, but it should not be confusing. The key point is that all terms involving $\psi_2(P)$ vanish, and so the result lies in the (two-sided) $.$-ideal generated by $P_{ab}$ and $dP_{ab}$. It follows that the restriction $\psi:L_2(\mathbb{C}\langle x,y\rangle)\to \Omega^2(\mathbb{C}[x,y])\to \Omega^2((\mathbb{C}\langle x,y\rangle/P)_{ab})$ factors through $L_2(\mathbb{C}\langle x,y\rangle)/\langle P\rangle\cap L_2(\mathbb{C}\langle x,y\rangle)=L_2(\mathbb{C}\langle x,y\rangle/P)$, and it is automatic that it factors further through $B_2(\mathbb{C}\langle x,y\rangle/P)$. Let $\phi$ denote the resulting map $B_2(\mathbb{C}\langle x,y\rangle/P)\to   \Omega^2((\mathbb{C}\langle x,y\rangle/P)_{ab})$.

\begin{lemma}
$\phi:B_2(\mathbb{C}\langle x,y\rangle/P)\to \Omega^2((\mathbb{C}\langle x,y\rangle/P)_{ab})$ is surjective.\end{lemma}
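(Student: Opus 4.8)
The plan is to show that every element of $\Omega^2((A_2/P)_{ab})\cong\mathbb{C}[x,y]/(\partial P_{ab}/\partial x,\partial P_{ab}/\partial y)\,dx\wedge dy$ lies in the image of $\phi$. Since the target is spanned by classes of the form $[f]\,dx\wedge dy$ for $f\in\mathbb{C}[x,y]$, and by linearity it suffices to treat $f$ a monomial $x^ay^b$, the task reduces to exhibiting, for each monomial $x^ay^b$, an element of $B_2(\mathbb{C}\langle x,y\rangle/P)$ whose image under $\phi$ is $x^ay^b\,dx\wedge dy$ (modulo the ideal). The natural candidate comes from the Feigin--Shoikhet computation: in $(\Omega^{ev}(\mathbb{C}[x,y]),\star)$ one has $a\star b-b\star a=da\wedge db$, hence $[y^a x,\,x^a y]$-type commutators, or more simply the image of a commutator $[u,v]\in L_2(\mathbb{C}\langle x,y\rangle)$ maps under $\psi$ to $du_{ab}\wedge dv_{ab}$ (the degree-$0$ parts cancel since $\psi_0$ is the abelianization and $[u,v]_{ab}=0$). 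Concretely $[x^{a+1}/(a+1),\, y]\mapsto d(x^{a+1}) \wedge dy/(a+1) = x^a\,dx\wedge dy$ works for pure powers of $x$ times $y$; for a general monomial one takes a commutator such as $[u,v]$ with $du_{ab}\wedge dv_{ab}$ a scalar multiple of $x^ay^b\,dx\wedge dy$, e.g. $u = x^{a+1}y^b$, $v=y$ giving $d(x^{a+1}y^b)\wedge dy = (a+1)x^ay^b\,dx\wedge dy$.

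The key point, which makes this work in the quotient and not merely in the free algebra, is already established in the excerpt: the composite $\psi:L_2(\mathbb{C}\langle x,y\rangle)\to\Omega^2(\mathbb{C}[x,y])\to\Omega^2((A_2/P)_{ab})$ descends to $\phi$ on $B_2(\mathbb{C}\langle x,y\rangle/P)$. So I do not need to worry about well-definedness — I only need to know the image of $\psi$ restricted to $L_2(\mathbb{C}\langle x,y\rangle)$, before passing to the quotient $\Omega^2((A_2/P)_{ab})$, already surjects onto $\Omega^2(\mathbb{C}[x,y])$; then composing with the surjection $\Omega^2(\mathbb{C}[x,y])\twoheadrightarrow\Omega^2((A_2/P)_{ab})$ gives surjectivity of $\phi$. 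And the surjectivity of $\psi:L_2\to\Omega^2(\mathbb{C}[x,y])$ is exactly (a consequence of) Lemma \ref{FSlem}: for $n=2$, $\phi_2:B_2(A_2)\to\Omega^{ev,+}(\mathbb{C}[x,y])$ is injective with image the closed forms, and every element of $\Omega^2(\mathbb{C}[x,y])$ — being a top-degree form — is closed, hence in the image of $\phi_2$, hence in the image of $\psi$ restricted to $L_2(A_2)$.

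Thus the proof is essentially a two-line diagram chase: factor $\phi$ as $B_2(\mathbb{C}\langle x,y\rangle/P)\xleftarrow{} B_2(A_2)\xrightarrow{\phi_2}\Omega^{ev,+}(\mathbb{C}[x,y])\supseteq\Omega^2(\mathbb{C}[x,y])\twoheadrightarrow\Omega^2((A_2/P)_{ab})$ — more precisely, the reduction map $B_2(A_2)\to B_2(\mathbb{C}\langle x,y\rangle/P)$ induced by the quotient $A_2\to A_2/P$ is surjective (the lower central series is functorial for surjections), the $\Omega^2$-component of $\phi_2$ on $B_2(A_2)$ is surjective onto $\Omega^2(\mathbb{C}[x,y])$ by Lemma \ref{FSlem} since all $2$-forms in two variables are closed, and $\phi$ composed with this reduction map agrees with the projection of $\phi_2$ to $\Omega^2$ followed by $\Omega^2(\mathbb{C}[x,y])\to\Omega^2((A_2/P)_{ab})$ by construction. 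Chasing these surjections gives that $\phi$ is surjective.

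The main obstacle — really the only thing requiring care — is checking that the diagram actually commutes, i.e.\ that $\phi\circ(\text{reduction})$ equals $(\text{projection onto }\Omega^2(\mathbb{C}[x,y])\text{ of }\phi_2)$ followed by $\Omega^2(\mathbb{C}[x,y])\twoheadrightarrow\Omega^2((A_2/P)_{ab})$. This holds essentially by definition, since both $\phi_2$ and $\phi$ are induced by the same algebra map $\psi:\mathbb{C}\langle x,y\rangle\to(\Omega^{ev}(\mathbb{C}[x,y]),\star)$ — $\phi_2$ is its restriction to $B_2(A_2)$ and $\phi$ is obtained from $\psi_2$ on $L_2$ after further quotienting the target — so one need only note that the degree-$0$ component $\psi_0$ contributes nothing on $L_2$ (as observed in the excerpt, elements of $\langle P\rangle\cap L_2$ and more generally commutators have vanishing abelianization) and that the square-free hypothesis on $P_{ab}$ was already used to guarantee $\phi$ is well defined. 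Everything else is the routine bookkeeping of monomials described above.
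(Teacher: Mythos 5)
Your argument is correct and is essentially the paper's own proof: both use Lemma \ref{FSlem} to see that $\phi_2:B_2(A_2)\to\Omega^2(\mathbb{C}[x,y])$ is surjective (since every positive even form in two variables is a top-degree $2$-form and hence closed), and then observe that $\phi$ is obtained from $\phi_2$ by passing to quotients on both source and target, so surjectivity is inherited. The explicit monomial witnesses in your first paragraph are unnecessary but harmless.
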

\begin{proof}Since all forms in $\Omega^{ev,+}(\mathbb{C}[x,y])=\Omega^2(\mathbb{C}\langle x,y\rangle_{ab})$ are closed, Lemma \ref{FSlem} shows that $\phi_2:B_2(\mathbb{C}\langle x,y\rangle)\to \Omega^2(\mathbb{C}\langle x,y\rangle_{ab})$ is an isomorphism, and in particular surjective. $\phi$ is a certain quotient of $\phi$ by construction, and hence also surjective.
\end{proof}


\section{Auxiliary Results}\label{sec:prelims}


We begin with a lemma on $B_2(A)$ for an associative algebra $A$; it essentially exists in (M. Balagovic, A. Balasubramanian, 2011) already.

\begin{lemma}\label{lem:b2rels}
For any $q_1$, \ldots, $q_n$, $Q$, $a$, $b$, $c\in A$, the following hold in $B_2(A)$:

\begin{enumerate}[label=(\roman*)]
\item\label{eq:b2rels1} $[Q, q_1q_2\ldots q_n]=\sum_{i=1}^{n}[q_{i+1}q_{i+2}\ldots q_nQq_1q_2\ldots q_{i-1}, q_i]$.
\item\label{eq:b2rels2} $[ab, c]=[ba, c]$.
\item\label{eq:b2rels3} $(l+k)[a^lb,a^k]=k[b, a^{l+k}]$.
\item\label{eq:b2rels4} $[a^{i_1}q_1a^{i_2}q_2\ldots a^{i_n}q_n, a^i]=[a^{\sum_ki_k}q_1q_2\ldots q_n, a^i]$.
\end{enumerate}

\end{lemma}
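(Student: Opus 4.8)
The plan is to prove the four identities in $B_2(A)$ by working modulo $L_3(A)$ throughout, exploiting the basic fact that in $B_2(A)$ a commutator $[u,v]$ is linear in each slot and that $[u,v]=-[v,u]$, together with the Leibniz-type rule $[u,vw]=[uv,w]+[wu,v]$ that follows from the Jacobi identity $[u,[v,w]]\in L_3(A)$. In fact the single most useful consequence is the cyclic-shift relation: for a product $q_1\cdots q_n$ and any $Q$, the element $[Q,q_1\cdots q_n]$ can be rewritten by repeatedly applying $[Q, q_1(q_2\cdots q_n)] = [Qq_1, q_2\cdots q_n] + [(q_2\cdots q_n)Q, q_1]$ and inducting on $n$. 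So I would prove \ref{eq:b2rels1} first, directly by this induction on $n$: the base case $n=1$ is trivial, and the inductive step writes $[Q,q_1\cdots q_n]=[Qq_1,q_2\cdots q_n]+[q_2\cdots q_nQ,q_1]$, applies the inductive hypothesis to the first term (with $Qq_1$ in the role of $Q$ and the shorter product $q_2\cdots q_n$), and reindexes; the terms assemble into exactly the claimed cyclic sum. This identity already appears essentially in \cite{newLCS}, so citing or lightly reproving it is enough.

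Next I would derive \ref{eq:b2rels2} as the special case $n=2$, $q_1=a$, $q_2=b$ of \ref{eq:b2rels1} applied to $[Q,ab]$ with $Q$ chosen appropriately — actually it is cleaner to note that \ref{eq:b2rels2} is immediate from $[ab,c]-[ba,c]=[[a,b],c]\in L_3(A)$. With \ref{eq:b2rels2} in hand (``trace-like'' cyclic invariance of the first argument), I would turn to \ref{eq:b2rels3}. Here I would apply \ref{eq:b2rels1} with $Q=a^k$ and the product $q_1\cdots q_n$ taken to be $a^l b$ read as $l+1$ factors ($a$ repeated $l$ times, then $b$); the cyclic sum on the right then has $l$ terms of the form $[a^{l+k-1}b a^{?}, a]$-type and one term $[a^l a^k b, b]$. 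Using \ref{eq:b2rels2} to cyclically rotate each first argument so the powers of $a$ are collected, all the $a$-slot terms become equal to $[a^{l+k-1}b,a]$, and a symmetric computation (or the same identity with the roles of the two ``letters'' swapped, i.e.\ expanding $[b,a^{l+k}]$) produces the stated linear relation $(l+k)[a^lb,a^k]=k[b,a^{l+k}]$ after collecting coefficients. I expect to need to be slightly careful bookkeeping the exponents and the count of repeated terms; that bookkeeping, rather than any conceptual difficulty, is the main obstacle.

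Finally, \ref{eq:b2rels4} says that in a bracket of the form $[\,\cdots, a^i]$ whose first argument is a word in $a$ and other letters $q_1,\dots,q_n$, one may slide all the $a$'s in the first argument together (total exponent $\sum_k i_k$) without changing the class in $B_2(A)$. I would prove this by repeatedly applying \ref{eq:b2rels1} with $Q=a^i$, or more directly by induction on $n$ using \ref{eq:b2rels2} to cyclically permute the first argument of $[\,\cdot\,,a^i]$ so as to move one block $a^{i_j}$ past the adjacent $q_j$: the correction terms produced by $[uv,w]=[vu,w] + (\text{terms in }L_3)$ vanish in $B_2(A)$, so each such move is free, and finitely many moves collect all powers of $a$. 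The only thing to watch is that \ref{eq:b2rels1}/\ref{eq:b2rels2} are being applied with the second argument held fixed at $a^i$, which is exactly the setting in which cyclic invariance of the first slot is available; once that is observed the argument is routine. Throughout, I would keep the exposition terse since these identities are minor variants of lemmas already in \cite{oldLCS} and \cite{newLCS}.
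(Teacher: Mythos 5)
Your treatments of (i)--(iii) are essentially sound and close to the paper's. Identity (i) follows exactly as you say; in fact $[u,vw]=[uv,w]+[wu,v]$ is an identity in $A$ itself, so (i) holds in $L_2(A)$, as the paper notes. For (iii) you expand $[a^k,a^lb]$ over the $l+1$ factors of $a^lb$, whereas the paper expands $[a^lb,a^k]$ over the $k$ factors of $a^k$; both work, but your bookkeeping has a slip: the term contributed by the factor $b$ is $[a^ka^l,b]=-[b,a^{l+k}]$, not $[a^la^kb,b]$. With that correction, combining with $[b,a^{l+k}]=(l+k)[a^{l+k-1}b,a]$ gives $[a^lb,a^k]=k[a^{l+k-1}b,a]$ and hence (iii).

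The genuine gap is in (iv). Your ``more direct'' argument proposes to use (ii) to ``move one block $a^{i_j}$ past the adjacent $q_j$,'' but (ii) only allows cyclic rotation of the first argument, and cyclic rotation preserves the underlying cyclic word: it can never transpose two adjacent letters, so it cannot merge the $a$-blocks, which remain separated by the $q_j$'s in every cyclic rotation of $a^{i_1}q_1\cdots a^{i_n}q_n$. (Your alternative, ``repeatedly applying (i) with $Q=a^i$,'' merely decomposes the bracket into terms of the form $[\cdot,a]$ and $[\cdot,q_j]$ and does not obviously reassemble into the claim.) The mechanism the paper actually uses is (iii): the relation $(i+i_1)[a^{i_1}w,a^i]=i[w,a^{i+i_1}]$ transfers the leading block $a^{i_1}$ out of the first slot and into the exponent of the second slot; one then rotates by (ii) to expose $a^{i_2}$, repeats, and finally applies (iii) once more to push $a^{\sum_k i_k}$ back into the first slot. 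This establishes (iv) only after multiplication by the product $(i+i_1)(i+i_1+i_2)\cdots(i+\sum_k i_k)$ of positive integers, which is precisely where the ground ring enters (harmless over $\mathbb{C}$; the paper handles the general case by reducing to the free algebra over $\mathbb{Z}$). Your sketch misses both this mechanism and the resulting divisibility issue.
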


\begin{proof}
c.f. Lemma 2.5 of (M. Balagovic, A. Balasubramanian, 2011). The lemma is for any associative ring $A$ (no assumption on commutative base ring). Since each identity in the lemma is inherited by quotients, it is enough to treat the case of a free associative algebra over $\mathbb{Z}$. \ref{lem:b2rels}.\ref{eq:b2rels1} holds in $L_2(A)$, by direct calculation. \ref{lem:b2rels}.\ref{eq:b2rels2} is clear. By \ref{lem:b2rels}.\ref{eq:b2rels1}, we have that $[a^lb,a^k]=\sum_{i=1}^{k}[a^{k-i}a^lba^{i-1}, a]$ which by \ref{lem:b2rels}.\ref{eq:b2rels2} is equal to $k[a^{l+k-1}b, a]$. So we have \begin{align*}[a^lb,a^k]=k[a^{l+k-1}b, a].\end{align*} It follows that $[b,a^{l+k}]=(l+k)[a^{l+k-1}b, a]$, whence $(l+k)[a^lb,a^k]=k(l+k)[a^{l+k-1}b, a]=k[b,a^{l+k}]$, which is \ref{lem:b2rels}.\ref{eq:b2rels3}.\\

\noindent It remains to show \ref{lem:b2rels}.\ref{eq:b2rels4}. By \ref{lem:b2rels}.\ref{eq:b2rels3}, we have $(i+i_1)[a^{i_1}q_1a^{i_2}q_2\ldots a^{i_n}q_n, a^i]=i[q_1a^{i_2}q_2\ldots a^{i_n}q_n, a^{i+i_1}]$, which by \ref{lem:b2rels}.\ref{eq:b2rels3} is equal to $i[a^{i_2}q_2\ldots a^{i_n}q_nq_1, a^{i+i_1}]$. Thus we have \begin{align*} (i+i_1)[a^{i_1}q_1a^{i_2}q_2\ldots a^{i_n}q_n, a^i]=i[a^{i_2}q_2\ldots a^{i_n}q_nq_1, a^{i+i_1}].\end{align*} Continuing, we have \begin{align*} (i+i_1+i_2)(i+i_1)[a^{i_1}q_1a^{i_2}q_2\ldots a^{i_n}q_n, a^i]=(i+i_1)i[a^{i_3}q_3\ldots a^{i_n}q_nq_1q_2, a^{i+i_1+i_2}]\end{align*} and eventually \begin{align*} (i+i_1+\ldots i_n)\ldots(i+i_1+i_2)(i+i_1)[a^{i_1}q_1a^{i_2}q_2\ldots a^{i_n}q_n, a^i]=(i+i_1+\ldots i_{n-1})\ldots(i+i_1)i[q_1q_2\ldots q_n, a^{i+\sum_k i_k}].\end{align*} One final application of \ref{lem:b2rels}.\ref{eq:b2rels3} to the RHS gives \begin{align*} (i+i_1+\ldots i_n)\ldots(i+i_1+i_2)(i+i_1)\left([a^{i_1}q_1a^{i_2}q_2\ldots a^{i_n}q_n, a^i]-[a^{\sum_ki_k}q_1q_2\ldots q_n, a^i]\right)=0.\end{align*} This completes the proof when $A$ is free over $\mathbb{Z}$, and so in every case.
\end{proof}

It follows (as in (G. Dobrowolska, J. Kim, X. Ma, 2008)) that $B_2(\mathbb{C}\langle x, y\rangle)$ is spanned by $[x^i,y^j]$ for $i,j\geq 1$. This also holds for its quotient $B_2(\mathbb{C}\langle x, y\rangle/P)$. This gives $\frac{t^{s+r}}{(1-t^s)(1-t^r)}$ a (bad) upper bound for the Hilbert-Poincar\'e series of $B_2(\mathbb{C}\langle x, y\rangle/P)$; to improve this upper bound we wish to find some (linear) relations between the elements $[x^i,y^j]$.



Recall that $P$ is (quasi-)homogeneous of degree $d$, and $P_{ab}$ is square-free. Since $r,s$ are coprime, we may write $P_{ab}=x^uy^v\sum_{k=0}^{n}a_kx^{(n-k)r}y^{ks}$ for some $n$, and some $u,v\in\{0,1\}$ with $d=us+vr+nrs$, and some $a_k\in\mathbb{C}$, with $a_0,a_n\neq 0$.




Let us write $u_k=u+(n-k)r$, $v_k=v+ks$, so that $P_{ab}=\sum_{k=0}^{n}a_kx^{u_k}y^{v_k}$. The following lemma gives relations among the elements $[x^i, y^j]$ spanning $B_2(\mathbb{C}\langle x, y\rangle/P)$ that allow us to improve the bound on its Hilbert-Poincar\'e series in Section \ref{sec:series}.

\begin{prop}\label{prop:bracketdep}
If $i\ge 0$ and $j\ge 1$, then, in $B_2(\mathbb{C}\langle x, y\rangle/\langle P\rangle)$,
$$\sum_{k=0}^{n}\frac{a_k}{j+v_k}[x^{i+u_k}, y^{j+v_k}]=0.$$
Similarly, if $i\ge 1$ and $j\ge 0$, then $\sum_{k=0}^{n}\frac{a_k}{i+u_k}[x^{i+u_k}, y^{j+v_k}]=0$.
\end{prop}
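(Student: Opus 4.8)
The plan is to work in $B_2(\mathbb{C}\langle x,y\rangle/P)$, where we know from Lemma~\ref{lem:b2rels} that every bracket $[x^i,y^j]$ can be manipulated by cyclic-type relations, and to exploit the fact that $P$ itself is zero in the quotient in order to produce the stated linear dependence. The starting point is to pick a convenient element of $\langle P\rangle\cap L_2(\mathbb{C}\langle x,y\rangle)$ — namely something of the form $[x^i, y^j P']$ or $[x^i y^j, P]$ — and expand its image in $B_2$ using the identity \ref{lem:b2rels}.\ref{eq:b2rels1}, which rewrites a bracket $[Q, q_1\cdots q_n]$ as a sum of brackets in which the $q_i$ are individually moved out. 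Since $P = \sum_k a_k x^{u_k} y^{v_k}$ modulo lower-order corrections (here "lower order" meaning: modulo terms involving the noncommutative ambiguity of how $x$'s and $y$'s are interleaved), the natural move is to take a bracket of the form $[x^{i}, \, y^{j}\widetilde P]$ where $\widetilde P$ is any lift of $P_{ab}$ to the free algebra, expand, and use \ref{lem:b2rels}.\ref{eq:b2rels4} to collapse each resulting monomial $x^{a_1}y^{b_1}x^{a_2}\cdots$ down to the normal form $x^{(\sum a)}y^{(\sum b)}$ — at which point the precise lift $\widetilde P$ no longer matters and only $P_{ab}$ survives.

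Concretely, I would proceed as follows. First, fix $i\ge 0$, $j\ge 1$, and consider the monomial $x^{u_k} y^{v_k}$ appearing in $P_{ab}$. Using \ref{lem:b2rels}.\ref{eq:b2rels3} in the form $(l+m)[a^l b, a^m] = m[b, a^{l+m}]$, rewrite $[x^{i+u_k}, y^{j+v_k}]$ in terms of $[x^{i+u_k} y^{\text{something}}, y^{\text{something}}]$ so that the coefficient $\tfrac{1}{j+v_k}$ emerges naturally: indeed $(j+v_k)[\,y^{v_k}\!\cdot(\text{stuff}),\,y^{\,j}\,]$-type expressions are exactly what \ref{lem:b2rels}.\ref{eq:b2rels3} produces, with the denominator $j+v_k$ being the total $y$-exponent. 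The idea is that the single element $[\,x^{i}\cdot(\text{lift of }P_{ab})\cdot(\text{something}),\, y^{j}\,]$, which lies in $\langle P\rangle\cap L_2$ and hence vanishes in $B_2(\mathbb{C}\langle x,y\rangle/P)$, expands via \ref{lem:b2rels}.\ref{eq:b2rels1} and \ref{lem:b2rels}.\ref{eq:b2rels4} into exactly $\sum_k a_k \cdot (j+v_k)^{-1}$ (up to an overall nonzero scalar) times $[x^{i+u_k}, y^{j+v_k}]$. Then I would divide out the common scalar to obtain the claimed identity. The second identity follows by the symmetric argument — exchanging the roles of $x$ and $y$, and correspondingly using the "$x$-side" version $(l+m)[a^l b, a^m] = m[b, a^{l+m}]$ with $a=x$ — or more slickly by applying the automorphism of $\mathbb{C}\langle x,y\rangle$ swapping $x\leftrightarrow y$, which sends $P_{ab} = \sum a_k x^{u_k} y^{v_k}$ to $\sum a_k y^{u_k} x^{v_k}$, a quasihomogeneous square-free polynomial of the same shape with the roles of $(u_k,v_k)$ reversed.

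The main obstacle will be bookkeeping the passage from the genuinely noncommutative element in $\langle P\rangle \cap L_2$ to the normal-form brackets $[x^i,y^j]$: one has to choose the auxiliary element carefully so that, after applying \ref{lem:b2rels}.\ref{eq:b2rels4} to straighten every monomial, the fractional coefficients $\tfrac{a_k}{j+v_k}$ appear with the right denominators and with no leftover terms of the wrong bidegree. In particular one must check that the overall scalar by which the whole expansion must be divided is nonzero — this is where it matters that $j\ge 1$ (so $j+v_k\ge 1>0$ for all $k$, and similarly $i+u_k\ge 1$ in the other case since $u_0 = u+nr \ge 1$ when $n\ge 1$, while the $n=0$ degenerate case where $P_{ab}$ is a single monomial must be handled separately or noted to be excluded by square-freeness unless $P_{ab}\in\{x,y,xy\}$). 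Everything else is a direct, if slightly tedious, application of the four parts of Lemma~\ref{lem:b2rels}.
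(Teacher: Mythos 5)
Your proposal is correct and follows essentially the same route as the paper, whose proof simply declares the result immediate from Lemma \ref{lem:b2rels}: one expands $[x^iP,y^j]=0$ in $B_2(\mathbb{C}\langle x,y\rangle/P)$, uses \ref{lem:b2rels}.\ref{eq:b2rels4} (together with \ref{lem:b2rels}.\ref{eq:b2rels2}) to see that only $P_{ab}=\sum_k a_kx^{u_k}y^{v_k}$ matters, and applies \ref{lem:b2rels}.\ref{eq:b2rels3} to each term to obtain $j\sum_{k}\frac{a_k}{j+v_k}[x^{i+u_k},y^{j+v_k}]=0$, then divides by $j\ge 1$. The bookkeeping you flag as the main obstacle is exactly this short computation, and your checks that the denominators and the overall scalar are nonzero are the right ones.
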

\begin{proof}
This is immediate from \ref{lem:b2rels}. Note in particular that \ref{lem:b2rels}.\ref{eq:b2rels4} implies that $[Q,y^j]$ depends only on $Q^{ab}$.
\end{proof}


\noindent Let us write $P_{ab}=x^uy^vQ$ where $Q=\sum_{k=0}^{n}a_kx^{(n-k)r}y^{ks}$. Then $Q$ is also square-free.

\begin{lemma}\label{lem:sqfree}
$\sum_{k=0}^{n}a_kx^{n-k}\in\mathbb{C}[x]$ is square-free. Symmetrically, $\sum_{k=0}^{n}a_kx^k\in\mathbb{C}[x]$ is also square-free.
\end{lemma}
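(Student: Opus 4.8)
We have $Q = \sum_{k=0}^n a_k x^{(n-k)r} y^{ks}$, a square-free homogeneous polynomial in $\mathbb{C}[x,y]$ (with respect to the grading $\deg x = s$, $\deg y = r$), and we want to conclude that $f(t) := \sum_{k=0}^n a_k t^{n-k}$ is square-free in $\mathbb{C}[t]$ (the other claim being symmetric, replacing $t$ by its "reverse"). The plan is to pass between $Q$ and $f$ via a substitution/dehomogenization. Note $Q$ factors over $\mathbb{C}$ into irreducibles; since $Q$ is homogeneous and every irreducible factor of a homogeneous polynomial in two variables is (up to scalar) of the form $\alpha x^r - \beta y^s$ after accounting for possible powers of $x$ and $y$, but more carefully: $Q$ is a polynomial in the two monomials $x^r$ and $y^s$. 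So write $Q = G(x^r, y^s)$ where $G(U,V) = \sum_k a_k U^{n-k} V^k$ is an honest homogeneous degree-$n$ form in two new variables $U,V$, and observe $f(t) = G(t,1)$ is its dehomogenization.

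The first step is to argue that $G(U,V) \in \mathbb{C}[U,V]$ is square-free. This is where the main work lies. The subtlety: the ring map $\mathbb{C}[U,V] \to \mathbb{C}[x,y]$ sending $U \mapsto x^r$, $V \mapsto y^s$ is injective (its image is the subalgebra of monomials whose $x$-exponent is divisible by $r$ and $y$-exponent divisible by $s$), so $Q$ square-free does not immediately give $G$ square-free — a priori $G$ could have a repeated factor that "spreads out" when we substitute. So I would argue the contrapositive: if $G = H^2 K$ for some nonconstant $H$, then $Q = H(x^r,y^s)^2 K(x^r, y^s)$ would have the repeated factor $H(x^r, y^s)$ (which is nonconstant since the substitution is injective and $H$ is nonconstant), contradicting square-freeness of $Q$. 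Hence $G$ is square-free. (Here I should be slightly careful that "square-free" means no repeated irreducible factor; $H(x^r,y^s)$ need not itself be irreducible, but any irreducible factor of it appears to multiplicity $\ge 2$ in $Q$, which is enough.)

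The second step is to descend from $G$ square-free to $f(t) = G(t,1)$ square-free. Since $G$ is homogeneous of degree $n$ in two variables, it factors as $G(U,V) = c\, V^m \prod_i (U - \lambda_i V)$ with the $\lambda_i$ distinct precisely when $G$ is square-free and, moreover, square-freeness forces $m \le 1$ and none of the $\lambda_i$ equal to $0$ when $m = 1$. Dehomogenizing, $f(t) = G(t,1) = c \prod_i (t - \lambda_i)$, a product of distinct linear factors, hence square-free — with the one bookkeeping point that dehomogenization drops the degree by exactly $m$, i.e. $\deg f = n - m$, and we only lose the (at most simple) factor $V$. For the symmetric statement, apply the same argument to $G(1,V) = \sum_k a_k V^k$, or equivalently note $\sum_k a_k t^k = t^n f(1/t)$ and $f$ square-free with the possible exception of the factor $t$ is exactly what transfers.

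I expect the only genuine obstacle to be the injectivity/"no spreading" argument in the first step — making precise that a repeated factor of $G$ pulls back to a repeated factor of $Q = G(x^r, y^s)$. The cleanest formulation is: the $\mathbb{C}$-algebra homomorphism $\iota : \mathbb{C}[U,V] \to \mathbb{C}[x,y]$, $U \mapsto x^r$, $V \mapsto y^s$ is injective (immediate by comparing monomial supports), so it sends a nonconstant polynomial to a nonconstant polynomial, and it is multiplicative; therefore $G = H^2 K \Rightarrow Q = \iota(H)^2 \iota(K)$ with $\iota(H)$ nonconstant, so any irreducible factor of $\iota(H)$ divides $Q$ to order $\ge 2$. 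Everything else is the standard structure theory of binary forms over $\mathbb{C}$ and is routine.
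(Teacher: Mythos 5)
Your argument is correct and is essentially the paper's own: the paper factors $\sum_k a_k x^{n-k}=a_0\prod_i(x-\alpha_i)$ over $\mathbb{C}$, substitutes to get $Q=a_0\prod_i(x^r-\alpha_iy^s)$, and concludes the $\alpha_i$ are distinct from square-freeness of $Q$, which is exactly your ``repeated factor of $G$ pulls back to a repeated factor of $Q$'' step made concrete via the explicit linear factorization. Your detour through the binary form $G(U,V)$ and the injective map $U\mapsto x^r$, $V\mapsto y^s$ is a more formal rendering of the same idea (and your $V^m$ bookkeeping is unnecessary here since $a_0,a_n\neq 0$), so there is no substantive difference.
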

\begin{proof}
Write $\sum_{k=0}^{n}a_kx^{n-k}\in\mathbb{C}[x]=a_0(x-\alpha_1)(x-\alpha_2)\ldots(x-\alpha_n)$, for $\alpha_i\in\mathbb{C}$.  Then $Q=a_0(x^r-\alpha_1y^s)(x^r-\alpha_2y^s)\ldots(x-\alpha_my^s)$, so the $\alpha_i$ are all distinct.
\end{proof}

\section{Bounding the Hilbert-Poincar\'e Series}\label{sec:series}

Recall we are trying to show that \begin{align}\label{eqn4}HP(B_2)\leq \frac{(t^{s}-t^{d})(t^{r}-t^{d})}{(1-t^{s})(1-t^{r})},\end{align} where $B_2$ denotes $B_2(\mathbb{C}\langle x, y \rangle/ P)$.  We only consider cases where $n\ge 1$: otherwise $\mathbb{C}\langle x, y\rangle/ P$ is either $\mathbb{C}[x]$ or $\mathbb{C}[y]$, and Theorem \ref{thm:main} clearly holds.  All equations involving the indeterminates $x$ and $y$ are either in $B_2$ or $\mathbb{C}[x, y]$; it will always be clear from context which of the two spaces is being used.

For $m\geq 0$, let $S_m=\{(i,j)\in\mathbb{Z}_{\geq 0}^{2}|si+rj=m\}$ and $S_m^+=\{(i,j)\in\mathbb{Z}_{> 0}^{2}|si+rj=m\}$, and denote by $B_2[m]$ the degree-$m$ part of $B_2$. We re-write an earlier observation in the following proposition.
\begin{prop}\label{prop:nullbasis}
$\dim B_2[m]\le |S^{+}_m|$.
\end{prop}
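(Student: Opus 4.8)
The plan is to show that the span of the $[x^i,y^j]$ with $(i,j)\in S_m$ but $(i,j)\notin S_m^+$ is already contained in the span of those with $(i,j)\in S_m^+$, so that $B_2[m]$ is spanned by $\{[x^i,y^j] : (i,j)\in S_m^+\}$, which has at most $|S_m^+|$ elements. Recall from the discussion after Lemma~\ref{lem:b2rels} that $B_2[m]$ is spanned by the $[x^i,y^j]$ with $si+rj=m$ and $i,j\geq 1$, together with the ``boundary'' elements where $i=0$ or $j=0$; but the latter are zero, since $[x^i,y^0]=[x^i,1]=0$ and $[x^0,y^j]=[1,y^j]=0$ in $B_2$. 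So in fact $B_2[m]$ is already spanned by $\{[x^i,y^j]:(i,j)\in S_m^+\}$ directly, and the only remaining point is to observe that these are exactly $|S_m^+|$ (possibly dependent) spanning vectors, giving $\dim B_2[m]\leq|S_m^+|$.

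Thus the proof is essentially a bookkeeping argument: first I would recall the spanning statement $B_2(\mathbb{C}\langle x,y\rangle/P)=\mathrm{span}\{[x^i,y^j]:i,j\geq 1\}$ established above; then I would restrict to the degree-$m$ graded piece, noting that the $[x^i,y^j]$ are homogeneous of degree $si+rj$, so $B_2[m]$ is spanned by those $[x^i,y^j]$ with $si+rj=m$ and $i,j\geq 1$, i.e.\ with $(i,j)\in S_m^+$; finally I would count, concluding $\dim B_2[m]\leq|S_m^+|$.

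I do not expect any real obstacle here — the statement is genuinely just a restatement (``we re-write an earlier observation''), so the work is purely in making the indexing precise. The one small subtlety worth stating explicitly is why the elements $[x^i,y^j]$ with $i=0$ or $j=0$ contribute nothing: these vanish in $B_2$ because a commutator with $1$ is zero. Any genuine reduction of the bound below $|S_m^+|$ is deferred to later sections, where Proposition~\ref{prop:bracketdep} supplies the linear relations among the $[x^i,y^j]$; the present proposition only needs the crude spanning set.
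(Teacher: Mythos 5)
Your proof is correct and is essentially the paper's own argument: the proposition is just the degree-$m$ restriction of the earlier observation that $B_2(\mathbb{C}\langle x,y\rangle/P)$ is spanned by the homogeneous elements $[x^i,y^j]$ with $i,j\geq 1$, which in degree $m$ are indexed exactly by $S_m^+$. Your extra remark that $[x^i,1]=[1,y^j]=0$ is harmless but not needed, since those elements are already excluded from the spanning set.
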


The next subsections mostly focus on cases where $S_{m-d}\neq\emptyset$.  In such cases, let $S_{m-d}=\{(p+(l-t)r, q+ts)\mid 0\le t\le l\}$, for some $l$, where $0\le p\le r-1$ and $0\le q\le s-1$.  Let $(p_t, q_t)$ denote $(p+(l-t)r, q+ts)$. Note that $S_m^+\subset \{(u+p+(n+l-i)r,v+q+is)| 0\leq i\leq n+l\}$, with equality holding unless $u+p=0$ or $v+q=0$. (This is because $u+p\leq r$, $v+q\leq s$.)

For convenience, we restate Proposition \ref{prop:bracketdep} in a slightly different form.

\begin{prop}\label{prop:bracketdep2}
For $t\in\{0,\ldots, l\}$, $p_t>0$ implies $\sum_{k=0}^{n}\frac{a_k}{u_k+p_t}[x^{u_k+p_t}, y^{v_k+q_t}]=0$ and $q_t>0$ implies $\sum_{k=0}^{n}\frac{a_k}{v_k+q_t}[x^{u_k+p_t}, y^{v_k+q_t}]=0$.
\end{prop}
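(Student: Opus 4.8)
The plan is to read this statement off directly from Proposition \ref{prop:bracketdep}. First I would record that, by the chosen description of $S_{m-d}$, the pair $(p_t,q_t)=(p+(l-t)r,\,q+ts)$ consists of nonnegative integers for every $t\in\{0,\dots,l\}$; this uses $0\le p\le r-1$, $0\le q\le s-1$, $0\le t\le l$ and $r,s>0$. In particular $p_t\ge 0$ and $q_t\ge 0$ hold automatically, so the hypotheses ``$p_t>0$'' and ``$q_t>0$'' are literally ``$p_t\ge 1$'' and ``$q_t\ge 1$''.

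Then I would simply instantiate Proposition \ref{prop:bracketdep} at $i=p_t$, $j=q_t$. Its first identity needs $i\ge 0$ and $j\ge 1$: since $p_t\ge 0$ always holds, the case $q_t>0$ lets me apply it and obtain $\sum_{k=0}^{n}\frac{a_k}{q_t+v_k}[x^{p_t+u_k},y^{q_t+v_k}]=0$, which, after rewriting the subscripts as $u_k+p_t$ and $v_k+q_t$, is the second assertion. Symmetrically, the second identity of Proposition \ref{prop:bracketdep} needs $i\ge 1$ and $j\ge 0$: since $q_t\ge 0$ always holds, the case $p_t>0$ gives $\sum_{k=0}^{n}\frac{a_k}{u_k+p_t}[x^{u_k+p_t},y^{v_k+q_t}]=0$, the first assertion.

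I do not expect a genuine obstacle here: all the work lives in Proposition \ref{prop:bracketdep}, which itself is an unwinding of Lemma \ref{lem:b2rels}. Were I reproving that proposition from scratch, the steps would be: observe that $x^iPy^j$ and $y^jx^iP$ both lie in $\langle P\rangle$, so $[x^iP,y^j]=0$ in $B_2(\mathbb{C}\langle x,y\rangle/\langle P\rangle)$; use Lemma \ref{lem:b2rels}.\ref{eq:b2rels4} to see that $[x^iP,y^j]$ depends only on $(x^iP)^{ab}=\sum_k a_k x^{i+u_k}y^{v_k}$; use Lemma \ref{lem:b2rels}.\ref{eq:b2rels2} and \ref{lem:b2rels}.\ref{eq:b2rels3} to rewrite each summand $[x^{i+u_k}y^{v_k},y^j]$ as $\tfrac{j}{j+v_k}[x^{i+u_k},y^{j+v_k}]$; and finally divide the resulting relation by $j$. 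The one place where care is needed is this last division, which forces $j\ge 1$ (and, in the mirror computation, $i\ge 1$) — and that is precisely why Proposition \ref{prop:bracketdep2} must be phrased as the two conditional statements ``$q_t>0$ implies $\ldots$'' and ``$p_t>0$ implies $\ldots$'' rather than as a single unconditional one.
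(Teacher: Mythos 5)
Your proposal is correct and matches the paper exactly: the paper offers no separate proof of Proposition \ref{prop:bracketdep2}, introducing it only as a restatement of Proposition \ref{prop:bracketdep}, and your instantiation at $(i,j)=(p_t,q_t)$ — with the observation that $p_t,q_t\ge 0$ always hold, so only the strict positivity needed for the relevant denominator/division must be assumed — is precisely the intended unwinding. Your supplementary sketch of Proposition \ref{prop:bracketdep} via Lemma \ref{lem:b2rels}\ref{eq:b2rels2}--\ref{eq:b2rels4} also agrees with the paper's one-line proof of that proposition.
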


We divide the remainder of the proof into three cases: Subsection \ref{subsec:nrs} deals with the case $(u,v)=(0,0)$, Subsection \ref{subsec:nrsrs} with $(u,v)=(1,1)$, and Subsection \ref{subsec:nrss} with $(u,v)=(0,1)$, which is equivalent to the case $(u,v)=(1,0)$. We will call the top row of a matrix the $0^{th}$, and so on.


\subsection{$P_{\text{ab}}=\sum_{k=0}^{n}a_kx^{(n-k)r}y^{ks}$}\label{subsec:nrs}


\begin{lemma}\label{lem:dimbd}
If $S_{m-d}\neq\emptyset$, then $\dim B_2[m]\le \max(n-l-1, 0)$.
\end{lemma}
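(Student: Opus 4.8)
The plan is to bound $\dim B_2[m]$ by exhibiting enough linear relations among the spanning set $\{[x^i,y^j] : si+rj = m,\ i,j \ge 1\}$ of $B_2[m]$, whose cardinality is $|S_m^+|$ by Proposition \ref{prop:nullbasis}. In the case $(u,v)=(0,0)$ we have $u_k = (n-k)r$, $v_k = ks$, and $d = nrs$. Since $S_{m-d}\neq\emptyset$ we may write $S_{m-d} = \{(p_t,q_t) = (p+(l-t)r,\, q+ts) : 0\le t\le l\}$ with $0\le p\le r-1$, $0\le q\le s-1$. The monomials $[x^i,y^j]$ appearing in degree $m$ are indexed by $(i,j) = (p + (n+l-e)r,\, q + es)$ for $e$ ranging over an appropriate interval; call the corresponding bracket $w_e := [x^{p+(n+l-e)r}, y^{q+es}]$. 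Proposition \ref{prop:bracketdep2} then gives, for each $t$ with $p_t > 0$, the relation $\sum_{k=0}^n \frac{a_k}{u_k+p_t}\, w_{t+?}= 0$ where the shift in index by $k$ is exactly $n-k$ downward (since $u_k = (n-k)r$ contributes $(n-k)$ to the $x$-exponent count in units of $r$) — in other words each relation is a ``windowed'' linear combination of $n+1$ consecutive $w_e$'s, with coefficients $a_k/(u_k + p_t)$.

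The key step is to extract from these windowed relations the bound $\dim B_2[m] \le \max(n-l-1,0)$. I would set this up as a linear-algebra rank computation: arrange the relations coming from all valid $t$ (and, where $p=q=0$ forces extra care at the boundary, possibly also the ``$q_t>0$'' relations) as the rows of a banded matrix $M$ whose columns are indexed by the $w_e$. The number of relations is roughly $l+1$ (one per value of $t\in\{0,\dots,l\}$, minus boundary corrections when $p=0$ or $q=0$), the number of columns is roughly $n+l+1$, so generically one expects $\dim \le n$; to get down to $n-l-1$ one must show the relations are ``as independent as possible'', i.e. $M$ has full rank $l+1$ (or $l+2$, accounting for the possibility of using both families of relations to gain one more). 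Banded matrices of this shape have full row rank provided the extreme diagonal entries are nonzero; the entry in row $t$, leftmost column is $a_0/(u_0 + p_t) = a_0/(nr + p_t) \ne 0$ and the rightmost is $a_n/(u_n+p_t) = a_n/p_t$, both nonzero since $a_0,a_n\ne 0$ and $p_t \ge p > 0$ whenever the relation is used. This is where Lemma \ref{lem:sqfree} should enter: square-freeness of $\sum_k a_k x^{n-k}$ rules out degeneracies among the weighted coefficient vectors $(a_k/(u_k+p_t))_k$ that could otherwise collapse the rank — in particular it prevents two different windows from being scalar multiples of each other in a way that would reduce the count.

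I expect the main obstacle to be the careful bookkeeping at the boundaries of the index range, i.e. controlling exactly which $w_e$ genuinely appear (those with $i\ge 1$ and $j\ge 1$, not just $i,j\ge 0$) and which relations are actually available. When $p = 0$, the point $p_0 = (l)r$ might still be positive but $p_l = 0$, so the relation from $t=l$ is the ``$q$-type'' rather than the ``$x$-type'', and similarly when $q=0$; the statement $S_m^+ \subsetneq \{(u+p+(n+l-i)r, v+q+is)\}$ exactly when $u+p=0$ or $v+q=0$ is the combinatorial heart of this and must be tracked to see whether one loses or gains a column versus the naive count. The clean way to organize this is: (1) write down the full banded relation matrix including all boundary relations; (2) argue its rank is $l+1$ (resp.\ $l+2$ in the favorable boundary case) by reducing to nonvanishing of leading/trailing band entries plus Lemma \ref{lem:sqfree}; (3) subtract from $|S_m^+|$ to land on $\max(n-l-1,0)$, with the $\max$ absorbing the degenerate range $l \ge n-1$ where the relations already span everything. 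A sanity check: summing $\dim B_2[m]$ over all $m$ against the claimed Hilbert series $\frac{(t^s-t^d)(t^r-t^d)}{(1-t^s)(1-t^r)}$ should reproduce exactly the weighted count of lattice points, which gives a useful consistency test for the boundary analysis.
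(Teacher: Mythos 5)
Your relation count is where the argument breaks down. In the case $p\neq 0$, $q\neq 0$, Proposition \ref{prop:bracketdep2} supplies \emph{two} relations for \emph{every} $t\in\{0,\dots,l\}$ --- one from $p_t>0$ and one from $q_t>0$ --- so the relation matrix has $2l+2$ rows acting on the $n+l+1$ unknowns $w_e$, and maximal rank yields $\dim B_2[m]\le (n+l+1)-(2l+2)=n-l-1$ (and $0$ once $2l+2\ge n+l+1$). You instead budget only $l+1$ rows, ``or $l+2$, accounting for the possibility of using both families of relations to gain one more.'' Full row rank of such a matrix gives only $\dim B_2[m]\le n$ (or $n-1$), which is not the claimed bound and, crucially, does not improve as $l$ grows; your own arithmetic (``generically one expects $\dim\le n$'') already shows the proposed mechanism cannot reach $n-l-1$. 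The second family is not a one-row bonus: it is a full second block of $l+1$ windowed relations, and the entire content of the lemma is the \emph{joint} independence of the two blocks.

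That joint independence is also where your appeal to Lemma \ref{lem:sqfree} is misdirected. For a single banded family your observation is correct --- nonvanishing of $a_0$ and $a_n$ at the ends of each window gives full row rank with no square-freeness needed at all. But the stacked $(2l+2)\times(n+l+1)$ matrix is not of that shape, and two windows being non-proportional is far from sufficient. The paper's proof rescales rows and columns (exploiting $s(u_k+p_t)+r(v_k+q_t)=m$) to convert the two blocks into shifted copies of the coefficient vectors of $h=a_0x^n+\dots+a_n$ and of $h'$; maximal rank then reduces to the nonvanishing of $\operatorname{Res}(h,h')$, read off from a Sylvester-type matrix, and \emph{that} is precisely what square-freeness of $h$ (Lemma \ref{lem:sqfree}) delivers. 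Your boundary bookkeeping concerns (which $w_e$ survive when $p=0$ or $q=0$, which relations drop out) are legitimate but secondary; without the full set of $2l+2$ relations and the resultant argument for their independence, the proof does not close.
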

\begin{proof}
Let $\Sylv(f, g)$ denote the Sylvester matrix of $f$ and $g$, for $f$, $g\in\mathbb{C}[x]$.  The relevant property of the Sylvester matrix is that $\det(\Sylv(f, g))=\operatorname{Res}(f,g)$ is the resultant of $f$ and $g$, which is equal to zero if and only if $f$ and $g$ share a root.  We divide this proof into three cases: $p\neq 0$ and $q\neq 0$, $p=0$ and $q\neq 0$, and $p=q=0$ (the case where $p\neq0$ and $q=0$ is covered by the second case by symmetry).

\begin{case}\label{case:nrs1}
$p\neq 0$ and $q\neq 0$:
\end{case}
In this case, by Proposition \ref{prop:bracketdep2}, for $t\in\{0,\ldots, l\}$ we have
\begin{equation}\label{eq:lindeps1}
\begin{array}{rcl}
\sum_{k=0}^{n}\frac{a_k}{p+(n+l-k-t)r}[x^{p+(n+l-k-t)r}, y^{q+(k+t)s}] & = & 0 \\
\sum_{k=0}^{n}\frac{a_k}{q+(k+t)s}[x^{p+(n+l-k-t)r}, y^{q+(k+t)s}] & = & 0.
\end{array}
\end{equation}
This is an equation $AX=0$, where $X$ is the column vector with entries $[x^{p+(n+l-i)r}, y^{q+i s}]$ for $0\leq i\leq n+l$ and where
\begin{equation*}
A=
\begin{pmatrix}
\frac{a_0}{p+(n+l)r} & \frac{a_1}{p+(n+l-1)r} & \ldots & \frac{a_n}{p+lr} & 0 & \ldots & 0 \\
0 & \frac{a_0}{p+(n+l-1)r} & \ldots & \frac{a_{n-1}}{p+lr} & \frac{a_n}{p+(l-1)r} & \ldots & 0 \\
\vdots & \vdots & \ddots & \vdots & \vdots & \ddots & \vdots \\
\ldots & \ldots & \frac{a_0}{p+nr} & \ldots & \ldots & \ldots & \frac{a_n}{p} \\
\frac{a_0}{q} & \frac{a_1}{q+s} & \ldots & \frac{a_n}{q+ns} & 0 & \ldots & 0 \\
0 & \frac{a_0}{q+s} & \ldots & \frac{a_{n-1}}{q+ns} & \frac{a_n}{q+(n+1)s} & \ldots & 0 \\
\vdots & \vdots & \ddots & \vdots & \vdots & \ddots & \vdots \\
\ldots & \ldots & \frac{a_0}{q+ls} & \ldots & \ldots & \ldots & \frac{a_n}{q+(n+l)s}
\end{pmatrix}.
\end{equation*}
Recall that $B_2[m]$ is spanned by the elements $[x^i, y^j]$ for $(i, j)\in S^{+}_m$. These are precisely the $n+l+1$ entries of $X$. $A$ is a $(2l+2)\times(l+n+1)$ matrix, so if it has maximal rank it will follow that $\dim B_2[m]\le\max(n-l-1, 0)$. We prove that $A$ has maximal rank.

We begin by performing a series of row- and column-operations on $A$. First scale the $i^{\text{th}}$ column of $A$ by $(p+(n+l-i)r)(q+is)$ for $0\le i\le n+l$. Next, for $0\le j\le l$, scale the $j^{\text{th}}$ row by $r$; then add $s$ times the $(j+l+1)^{\text{th}}$ row to the $j^{\text{th}}$; then scale the $j^{\text{th}}$ row by $m^{-1}$; then subtract $p+lr$ times the $j^{\text{th}}$ row from the $(j+l+1)^{\text{th}}$ row; then scale the $(j+l+1)^{\text{th}}$ row by $r^{-1}$.

This produces the equivalent matrix \begin{equation*}
B=
\begin{pmatrix}
a_0 & a_1 & \ldots & a_n & 0 & \ldots & 0 \\
0 & a_0 & \ldots & a_{n-1} & a_n & \ldots & 0 \\
\vdots  & \vdots & \ddots & \vdots & \vdots & \ddots & \vdots \\
0 & \ldots & a_0 & \ldots & \ldots & \ldots & a_n \\
na_0 & (n-1)a_1 & \ldots & 0a_n & 0 & \ldots & 0 \\
0 & na_0 & \ldots & a_{n-1} & 0a_n & \ldots & 0 \\
\vdots  & \vdots & \ddots & \vdots & \vdots & \ddots & \vdots \\
0 & \ldots & na_0 & \ldots & \ldots & \ldots & 0a_n
\end{pmatrix}
\end{equation*}where the first $l+1$ rows contain $(a_0,\ldots,a_n)$ and the second $l+1$ rows contain $(na_0,\ldots,0a_n)$.


Let $h=a_0x^n+\ldots+a_n$, which is square-free thanks to Lemma \ref{lem:sqfree}. Thus the resultant of $h$ with $h'$ is nonzero. Now notice that if $l+1\leq n-1$, then it is possible to obtain $\operatorname{Sylv}(h,h')$ from $B$ first appending some columns of zeros to the right, and then inserting some rows. So in that case the rows are linearly independent, and $B$ (whence also $A$) has maximal rank. So suppose that $l+1\geq n$. Consider the $(n+l+1)\times (n+l+1)$ submatrix of $B$ consisting of the first $n+l+1$ rows; one may check that its determinant is equal to $\pm a_n^{l-n+2}\operatorname{Res}(h,h')\neq 0$; thus in this case the columns are linearly independent, and $B$ (whence also $A$) has maximal rank.

\begin{case}\label{case:nrs2}
$p=0$ and $q\neq 0$:
\end{case}
Here Proposition \ref{prop:bracketdep2} gives us $2l+1$ linear relations: two for every $(p_t, q_t)$ with $p_t$, $q_t>0$, and one for $(0, q+ls)$.  This system of equations involves $l+n$ unknowns, for the $l+n$ elements in $S^{+}_m$, giving a $(2l+1)\times(l+n)$ matrix equation, with a matrix $A$.  As in Case \ref{case:nrs1}, the result is established by showing the independence of the rows of $A$ for $l\le n-1$ and showing the existence of a nonzero $(l+n)\times(l+n)$ minor of $A$ for $l\ge n$.  The proof is entirely analogous to the previous case; we only comment that although the bottom row of $A$ (corresponding to the one relation for $t=l$) has no counterpart row, it can nonetheless be brought to $(0,\ldots,na_0,\ldots,a_{n-1})$ purely by row and column scaling. Thus $A$ is equivalent to \begin{equation*}
B=
\begin{pmatrix}
a_0 & a_1 & \ldots & a_n & 0 & \ldots & 0 \\
0 & a_0 & \ldots & a_{n-1} & a_n & \ldots & 0 \\
\vdots  & \vdots & \ddots & \vdots & \vdots & \ddots & \vdots \\
0 & \ldots & a_0 & \ldots & \ldots & \ldots & a_n \\
na_0 & (n-1)a_1 & \ldots & a_{n-1} & 0 & \ldots & 0 \\
0 & na_0 & \ldots & 2a_{n-2} & a_{n-1} & \ldots & 0 \\
\vdots  & \vdots & \ddots & \vdots & \vdots & \ddots & \vdots \\
0 & \ldots & na_0 & \ldots & \ldots & \ldots & a_{n-1}
\end{pmatrix}
\end{equation*} where the first $l$ rows contain $(a_0,\ldots,a_n)$ and the second $l+1$ rows contain $(na_0,\ldots,a_{n-1})$. Similar arguments demonstrate that it has maximal rank.

\begin{case}\label{case:nrs3}
$p=q=0$:
\end{case}
We play the same game. This time, $A$ is of size $(2l)\times (n+l-1)$ and is similarly equivalent to
\begin{equation*}B=
\begin{pmatrix}
a_0 & a_1 & \ldots & a_{n-1} & a_n & 0 & \ldots & 0 \\
0 & a_0 & \ldots & a_{n-2} & a_{n-1} & a_n & \ldots & 0 \\
\vdots & \vdots & \ddots & \vdots & \vdots & \vdots & \ddots & \vdots \\
0 & 0 & \ldots & a_0 & \ldots & \ldots & a_{n-1} & a_n\\
a_1 & 2a_2 & \ldots & na_n & 0 & 0 & \ldots & 0 \\
na_0 & (n-1)a_1 & \ldots & 0 & 0 & \ldots & 0 \\
0 & na_0 & \ldots & 2a_{n-2} &  a_{n-1} & 0 & \ldots & 0 \\
\vdots & \vdots & \ddots & \vdots & \vdots & \vdots & \ddots & \vdots \\
0 & \ldots & na_0 & \ldots & \ldots & \ldots & \ldots & a_{n-1}
\end{pmatrix}.
\end{equation*} where the first $l-1$ rows contain $(a_0,\ldots,a_n)$, the next (i.e. the $(l-1)^{th}$) row contains $(a_1,2a_2,\ldots,na_n)$ and the last $l$ rows contain $(na_0,\ldots,a_{n-1})$.

First, let $M$ denote the special case of $B$ where $l=n-1$. We claim that $M$ is nonsingular.  To see this, consider $\Sylv(h, h')$.  Subtracting $n$ times the $0^{\text{th}}$ row from the $(n-1)^{\text{th}}$ row of $\Sylv(h, h')$ and scaling the $(n-1)^{th}$ row by $-1$ yields a matrix whose determinant is given by $a_0\det M\neq 0$.

For $l\leq n-1$, we can obtain $M$ from $B$ by first adding some columns of zeros to the right, and then inserting some rows; it follows that the rows of $B$ are linearly independent. So assume $l\geq n$. As in the previous cases, one may check that the determinant of the square matrix given by the first $n+l$ rows, excluding the $(l-1)^{th}$ (that is the row starting $(a_1,2a_2,\ldots)$) is equal to $\pm a_n^{l-n}\operatorname{Res}(h,h')\neq 0$; thus the columns of $B$ are linearly independent.\end{proof}

We are now in a position to complete the proof of Theorem \ref{thm:main} in this case. When $S_{m-d}\neq\emptyset$, there are $\max(n-l-1, 0)$ elements $(i, j)\in S_m$ satisfying $1\le i\le nr-1$ and $1\le j\le ns-1$, namely $\{(p+(n+l-i)r,q+is)| l+1\leq i\leq n-1\}$. We remark that the corresponding elements of $B_2[m]$ do not necessarily span $B_2[m]$, but it does not matter. We conclude that \begin{align*}\dim B_2[m]\leq |S_m\cap\{(i, j)\in\mathbb{Z}^2\mid 1\le i\le nr-1 \text{ and } 1\le j\le ns-1\}|\end{align*} for all $m$.  We now see that the coefficients of the Hilbert-Poincar\'e series of $B_2$ are bounded above by the coefficients of $(t^r+t^{2r}+\ldots+t^{(ns-1)r})(t^s+t^{2s}+\ldots+t^{(nr-1)s})$, which is equal to $\frac{(t^{s}-t^{d})(t^{r}-t^{d})}{(1-t^{s})(1-t^{r})}$, as required ($d=nrs$ in this case).

\subsection{$P_{\text{ab}}=xy(\sum_{k=0}^{n}a_kx^{(n-k)r}y^{ks})$}\label{subsec:nrsrs}

The proof of Theorem \ref{thm:main} here is similar to its proof in Subsection \ref{subsec:nrs}.  As in Subsection \ref{subsec:nrs}, we assume $S_{m-d}\neq\emptyset$ and consider three cases: $p\neq 0$ and $q\neq 0$, $p=0$ and $q\neq 0$, and $p=q=0$. Again in each case we write $AX=0$ where $X$ contains the elements spanning $B_2[m]$ labeled by elements of $S_m^+$, and $A$ encodes all relations given by Proposition \ref{prop:bracketdep2}. In each case we use variations of the arguments found in Subsection \ref{subsec:nrs} to show that $A$ has maximal rank. We record the results, together with comments on the proofs, below.


\begin{lemma}\label{lem:rsdim1}
If $S_{m-d}\neq\emptyset$, $p\neq 0$ and $q\neq 0$, then $\dim B_2[m]\le\max(n-l-1, 0)$.
\end{lemma}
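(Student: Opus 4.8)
The plan is to mirror the proof of Lemma \ref{lem:dimbd}, Case \ref{case:nrs1}, making the substitutions forced by the extra factor of $xy$ in $P_{ab}$. Concretely, now $u=v=1$, so $u_k = 1+(n-k)r$ and $v_k = 1+ks$. When $p\neq 0$ and $q\neq 0$, Proposition \ref{prop:bracketdep2} again gives two families of relations, indexed by $t\in\{0,\dots,l\}$: one coming from each $p_t>0$ and one from each $q_t>0$ (all of which hold, since $p,q>0$ forces $p_t,q_t>0$). As before this assembles into a matrix equation $AX=0$, where $X$ is the column vector of the elements $[x^{1+p+(n+l-i)r},\,y^{1+q+is}]$ for $0\le i\le n+l$; since $u+p>0$ and $v+q>0$, the remark preceding Proposition \ref{prop:bracketdep2} tells us that $S_m^+$ is exactly this set of $n+l+1$ indices, so $X$ really does list a spanning set of $B_2[m]$. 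The matrix $A$ has size $(2l+2)\times(l+n+1)$, so showing it has maximal rank yields $\dim B_2[m]\le\max(n+l+1-(2l+2),0)=\max(n-l-1,0)$, which is the claim.

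The next step is the row/column reduction of $A$ to a matrix $B$ with constant (scalar) entries, exactly as in Case \ref{case:nrs1}. The scaling factors change slightly: one scales the $i^{\text{th}}$ column by $(1+p+(n+l-i)r)(1+q+is)$, and the row operations use the relevant denominators $1+q+\,\cdot\,$ and $1+p+\,\cdot\,$ in place of $q+\,\cdot\,$ and $p+\,\cdot\,$. I expect the same combination — scaling a top row by $r$, adding $s$ times the matching bottom row, rescaling, subtracting a multiple of the top row from the bottom, rescaling — to clear all the $p$- and $q$-dependence and leave the top $l+1$ rows equal to $(a_0,\dots,a_n)$ (shifted) and the bottom $l+1$ rows equal to $(na_0,(n-1)a_1,\dots,0\cdot a_n)$ (shifted); the only substantive check is that identity \eqref{eqn1}-style bookkeeping of the degree $d=s+r+nrs$ still makes the cross-terms cancel. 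Once $B$ is in this normal form it is literally the same matrix as in Case \ref{case:nrs1}, so the rank argument via the Sylvester matrix of $h=a_0x^n+\dots+a_n$ and $h'$ — using Lemma \ref{lem:sqfree} to get $\operatorname{Res}(h,h')\neq 0$ — applies verbatim, splitting into the subcases $l+1\le n-1$ (embed $\operatorname{Sylv}(h,h')$, rows independent) and $l+1\ge n$ (the first $n+l+1$ rows have determinant $\pm a_n^{l-n+2}\operatorname{Res}(h,h')\neq 0$, columns independent).

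The main obstacle, such as it is, is purely bookkeeping: verifying that after the shift $p\mapsto 1+p$, $q\mapsto 1+q$, the row operations still telescope the way they do in Case \ref{case:nrs1}, i.e. that the factor analogous to $m^{-1}$ (here built from $s(1+q+\cdot)+r(1+p+\cdot)$, which should again be a constant multiple of the total degree $m+d$ or similar) is genuinely constant across the row, so that the reduction is legitimate. Since the paper explicitly says the proof is "similar" and only records the results with comments, I would state the reduced matrix $B$ (noting it coincides with the one in Case \ref{case:nrs1}), remark that the reduction goes through after the evident change of scaling factors, and then invoke the already-completed rank computation. This is exactly the level of detail used for Lemmas \ref{lem:rsdim1} and the analogous statements in Subsection \ref{subsec:nrsrs}, so no new idea is needed beyond confirming that the arithmetic of the denominators still cooperates.
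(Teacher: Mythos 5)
Your proposal is correct and follows essentially the same route as the paper, which likewise reduces to Case \ref{case:nrs1} of Lemma \ref{lem:dimbd} by noting that all denominators increase by $1$ and that the same row/column operations produce the same matrix $B$. The one bookkeeping point you flag does check out: after scaling the $i^{\text{th}}$ column by $(1+p+(n+l-i)r)(1+q+is)$, the combination $r(1+q+is)+s(1+p+(n+l-i)r)=r+s+sp+rq+(n+l)rs$ is exactly $m$ (since $d=nrs+r+s$ here), so the telescoping is legitimate and the Sylvester-matrix rank argument applies verbatim.
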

\begin{proof}
This is essentially identical to Case \ref{case:nrs1} of Lemma \ref{lem:dimbd}; we still have $2l+2$ linear equations in $n+l+1$ spanning elements; $A$ appears slightly different (all the denominators are greater by $1$) but it is equivalent by the same operations to the same matrix $B$ as in that case.\end{proof}

\begin{lemma}\label{lem:rsdim2}
If $S_{m-d}\neq\emptyset$, $p=0$, and $q\neq 0$, then $\dim B_2[m]\le\max(n-l, 0)$.
\end{lemma}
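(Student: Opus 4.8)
The plan is to mimic the structure of Case~\ref{case:nrs2} of Lemma~\ref{lem:dimbd}, but now tracking the extra shift coming from the factor $xy$ in $P_{ab}$. Since $(u,v)=(1,1)$ here, we have $u_k = 1 + (n-k)r$ and $v_k = 1 + ks$. With $p=0$ and $q\neq 0$, Proposition~\ref{prop:bracketdep2} gives us, for each $t\in\{0,\ldots,l\}$, the ``$q_t$-relation'' $\sum_k \frac{a_k}{v_k+q_t}[x^{u_k+p_t},y^{v_k+q_t}]=0$ (valid since $q_t = q+ts > 0$ always when $q>0$), and additionally the ``$p_t$-relation'' $\sum_k \frac{a_k}{u_k+p_t}[x^{u_k+p_t},y^{v_k+q_t}]=0$ whenever $p_t = (l-t)r > 0$, i.e.\ for $t\in\{0,\ldots,l-1\}$. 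This yields $2l+1$ relations. The spanning elements of $B_2[m]$ indexed by $S_m^+$ are $[x^{u+p+(n+l-i)r},\, y^{v+q+is}]$; since $u+p = 1 \le r$ but could equal $r$ only if $r=1$, and $v+q\le s$, the set $S_m^+$ has either $n+l+1$ or $n+l$ elements depending on whether the index $i$ can hit the boundary. I would first pin down exactly which boundary values of $i$ survive: here $u=1$ and $p=0$ so $u+p=1>0$, meaning the ``$x$-exponent vanishing'' boundary is not hit, but $v+q$ could be $0$... no, $v=1$ so $v+q\ge 1>0$ as well. So actually $S_m^+$ has the full $n+l+1$ elements, making this a $(2l+1)\times(n+l+1)$ system.

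Next I would perform the same row/column operations as in Case~\ref{case:nrs2}: scale the $i$-th column of $A$ by $(u_k+p_t)$-type factors $\big(u+p+(n+l-i)r\big)\big(v+q+is\big)$, then for each $t$ combine the $p_t$-relation row and the $q_t$-relation row by the linear combination $r\cdot(\text{row}) + s\cdot(\text{row})$, rescale by $m^{-1}$, and clean up, exactly as before; the point of these operations (which work because $P_{ab} = d^{-1}(sx\,\partial_x P_{ab} + ry\,\partial_y P_{ab})$, the quasihomogeneity identity~\eqref{eqn1}) is to strip away all denominators and land on a matrix $B$ whose rows are shifts of $(a_0,\ldots,a_n)$ (from the ``$h$-rows'') and shifts of $(na_0,(n-1)a_1,\ldots,0\cdot a_n)$ (from the ``$h'$-rows''). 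The extra ``$+1$'' in every denominator, caused by $u=v=1$, washes out under these operations just as the authors note in Lemma~\ref{lem:rsdim1}. The one structural difference from Case~\ref{case:nrs2} is the count: here we get $l$ rows of $h$-type (from $t=0,\ldots,l-1$, where both relations are available and get combined) plus $l+1$ rows of $h'$-type... I would need to recount carefully — the bottom row for $t=l$ only contributes its $q_l$-relation, which after scaling becomes an $h'$-type row — so I expect $l$ $h$-type rows and $l+1$ $h'$-type rows, giving the matrix $B$ essentially identical to the one displayed in Case~\ref{case:nrs2}, but with one fewer row than in Lemma~\ref{lem:rsdim1}. This costs one in the rank, explaining why the bound is $\max(n-l,0)$ rather than $\max(n-l-1,0)$.

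Finally, I would establish that $B$ has maximal rank by the same resultant argument: setting $h = a_0x^n+\cdots+a_n$, which is square-free by Lemma~\ref{lem:sqfree}, so $\operatorname{Res}(h,h')\neq 0$. When $l \le n-1$ one obtains $\operatorname{Sylv}(h,h')$ (or a slight modification of it, as in Case~\ref{case:nrs2}) from $B$ by appending zero columns and inserting rows, so the rows of $B$ are independent and $B$ has rank $2l+1$; this gives $\dim B_2[m]\le (n+l+1)-(2l+1) = n-l$. When $l\ge n$ one instead exhibits a nonzero $(n+l+1)\times(n+l+1)$... but wait, $B$ has only $2l+1$ rows and $2l+1 < n+l+1$ exactly when $l<n$, so for $l\ge n$ we have $2l+1 \ge n+l+1$ and we instead exhibit a nonvanishing $(n+l+1)\times(n+l+1)$ minor equal to $\pm a_n^{\,?}\operatorname{Res}(h,h')$ for the appropriate power, proving the columns are independent and hence $\dim B_2[m]=0=\max(n-l,0)$.

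The main obstacle I anticipate is \emph{not} any deep idea but the bookkeeping: getting the exact number of $h$-rows versus $h'$-rows right, confirming that the combined row operations (which rely on~\eqref{eqn1}) genuinely clear every denominator in the $(u,v)=(1,1)$ case, and — most delicately — verifying the precise minor whose determinant is a nonzero multiple of $\operatorname{Res}(h,h')$ in the $l\ge n$ subcase, since the asymmetry between the number of $h$- and $h'$-type rows means the relevant square submatrix is not quite the Sylvester matrix and one must choose which rows to discard. Given the authors' stated intent to ``record the results, together with comments on the proofs,'' I would present this as a short paragraph pointing to Case~\ref{case:nrs2} of Lemma~\ref{lem:dimbd}, noting the one-lower row count and the resulting $\max(n-l,0)$ bound, and flagging the denominator-shift as handled identically to Lemma~\ref{lem:rsdim1}.
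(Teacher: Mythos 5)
Your proposal follows essentially the same route as the paper: the same count of $2l+1$ relations versus $n+l+1$ spanning elements of $S_m^+$, the same row/column reductions to shifted $(a_0,\ldots,a_n)$- and $(na_0,\ldots,a_{n-1})$-rows, and the same Sylvester/resultant argument split into $l\le n-1$ (independent rows) and $l\ge n$ (nonvanishing maximal minor). The only point the paper makes slightly more explicit is that the resulting matrix is the $B$ of Case~\ref{case:nrs2} with one extra column adjoined whose sole nonzero entry sits in the bottom row (accounting for the extra element of $S_m^+$), which is exactly the bookkeeping item you flagged and which does not change the argument.
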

\begin{proof}
This is similar to the proof of Case \ref{case:nrs2} of Lemma \ref{lem:dimbd}.  The main difference is that $S^{+}_m$ had $n+l$ elements in Subsection \ref{subsec:nrs}, whereas here it has $n+l+1$ elements. So $A$ is of size $(2l+1)\times(n+l+1)$. It is equivalent to the matrix obtained from the matrix $B$ of Case \ref{case:nrs2} of Lemma \ref{lem:dimbd} by adjoining an extra column whose bottom entry is $1$ and whose other entries are $0$. This is easily seen to have maximal rank.\end{proof}

\begin{lemma}\label{lem:rsdim3}
If $S_{m-d}\neq\emptyset$ and $p=q=0$, then $\dim B_2[m]\le\max(n-l+1, 0)$.
\end{lemma}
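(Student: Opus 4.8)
The plan is to imitate Case~\ref{case:nrs3} of Lemma~\ref{lem:dimbd}, tracking the two extra spanning elements that occur here. Since now $u+p=v+q=1>0$, the set $S_m^+$ is all $n+l+1$ pairs $(1+(n+l-i)r,\,1+is)$, $0\le i\le n+l$, and I write the corresponding brackets $X_i=[x^{1+(n+l-i)r},y^{1+is}]$ as the entries of a column $X$. With $p=q=0$ we have $p_t>0\iff t<l$ and $q_t>0\iff t>0$, so Proposition~\ref{prop:bracketdep2} gives $2l$ relations---one ``$p_t$'' relation for each $t\in\{0,\dots,l-1\}$ and one ``$q_t$'' relation for each $t\in\{1,\dots,l\}$---forming a system $AX=0$ with $A$ of size $(2l)\times(n+l+1)$. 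It suffices to show $A$ has maximal rank.

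First I would run on $A$ the row- and column-operations of Case~\ref{case:nrs3} (scale the $i$-th column by $(1+(n+l-i)r)(1+is)$, and for each $t\in\{1,\dots,l-1\}$ combine the paired $p_t$ and $q_t$ rows exactly as in Case~\ref{case:nrs1}); as noted for Lemma~\ref{lem:rsdim1}, the extra $1$'s in the denominators do not change the outcome. The $2(l-1)$ rows from the paired $t$'s are supported on the interior columns $1,\dots,n+l-1$ and reduce to precisely the shifts of $(a_0,\dots,a_n)$ and of $(na_0,\dots,a_{n-1})$ appearing in Case~\ref{case:nrs3}. The only difference is in the two unpaired relations: the $t=0$ relation now also involves $X_0=[x^{1+(n+l)r},y]$, so after scaling it becomes the old ``special'' row $(a_1,2a_2,\dots,na_n)$ with one new nonzero entry (a nonzero multiple of $a_0$) in a new leftmost column; symmetrically the $t=l$ relation now involves $X_{n+l}=[x,y^{1+(n+l)s}]$ and becomes the old bottom row $(na_0,\dots,a_{n-1})$ with one new nonzero entry (a nonzero multiple of $a_n$) in a new rightmost column. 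Thus $A$ is equivalent to the matrix $B'$ obtained from the matrix $B$ of Case~\ref{case:nrs3} by adjoining two columns, each zero except for a single nonzero entry---the left one in the special row, the right one in the bottom row.

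It then remains to check $B'$ has maximal rank, splitting on $l$. If $l\le n+1$: each new column has a single nonzero entry, so in any dependence among the rows of $B'$ the coefficients of the special and bottom rows vanish, and the remaining $2(l-1)$ rows may be identified, under the evident matching of columns with monomials, with the shifts $h,xh,\dots,x^{l-2}h$ and $h',xh',\dots,x^{l-2}h'$ of $h=a_0x^n+\dots+a_n$; these are independent since $h$ is square-free (Lemma~\ref{lem:sqfree}), as a relation $f(x)h+g(x)h'=0$ with $\deg f,\deg g\le l-2\le n-1$ forces $h\mid g$, whence $g=0$ and $f=0$. So $\operatorname{rank}A=2l$ and $\dim B_2[m]\le(n+l+1)-2l=\max(n-l+1,0)$. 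If $l\ge n+1$: I would exhibit a nonzero $(n+l+1)\times(n+l+1)$ minor by keeping the special row, the bottom row, all $l-1$ interior $(a_0,\dots,a_n)$-rows, and $n$ interior $(na_0,\dots,a_{n-1})$-rows; expanding along the two new columns deletes these two rows and columns and leaves exactly the $(n+l-1)\times(n+l-1)$ submatrix shown in Case~\ref{case:nrs3} to have determinant $\pm a_n^{\,l-n}\operatorname{Res}(h,h')\ne0$. Hence the minor is nonzero, $\operatorname{rank}A=n+l+1$, and $\dim B_2[m]\le0=\max(n-l+1,0)$.

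The step I expect to require the most care is the reduction in the second paragraph---verifying that the Case~\ref{case:nrs3} operations, performed here, really do return $B'$, i.e.\ that the interior rows are unchanged and that each of the two unpaired relations picks up exactly one new nonzero entry, in exactly the two new columns. This is a direct if slightly fiddly computation, wholly parallel to (and no harder than) those already carried out in Lemma~\ref{lem:dimbd}; granting it, the rank argument is routine.
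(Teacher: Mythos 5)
Your proposal follows the paper's proof exactly: the paper likewise reduces $A$ to the matrix $B$ of Case~\ref{case:nrs3} with two adjoined columns, each supported in a single row, and then asserts maximal rank; your two-pronged rank argument (row independence for $l\le n+1$ via square-freeness of $h$, a nonzero maximal minor for $l\ge n+1$ via $\operatorname{Res}(h,h')$) correctly fills in what the paper leaves as ``easily seen.'' One caveat on the step you flagged: after the column scaling by $(1+(n+l-i)r)(1+is)$, the unpaired $t=0$ row is $(a_0,\,a_1(1+s),\,a_2(1+2s),\dots,a_n(1+ns))$, whose tail is \emph{not} a scalar multiple of the special row $(a_1,2a_2,\dots,na_n)$ (and similarly for the $t=l$ row), so the literal claim that $A$ reduces to $B'$ is not quite right --- a slip the paper shares. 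This is harmless, because both of your rank arguments use only the facts that the two new columns each contain a single nonzero entry, located in the two unpaired rows, and that the $2(l-1)$ paired rows reduce to the usual shifts of $(a_0,\dots,a_n)$ and $(na_0,\dots,0a_n)$; both facts hold for the actual matrix, so expanding along the two new columns (or killing the coefficients of the two unpaired rows in a putative dependence) still lands you on exactly the Case~\ref{case:nrs3} submatrix with determinant $\pm a_n^{l-n}\operatorname{Res}(h,h')\ne 0$.
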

\begin{proof}
This is similar to the proof of Case \ref{case:nrs3} of Lemma \ref{lem:dimbd}.  The main difference is that $S^{+}_m$ had $n+l-1$ elements in Subsection \ref{subsec:nrs}, whereas here it has $n+l+1$ elements.  $A$ is then of size $(2l)\times(n+l+1)$. It is equivalent to the matrix obtained from the matrix $B$ of Case \ref{case:nrs3} of Lemma \ref{lem:dimbd} by adjoining an extra column whose bottom entry is $1$ and whose other entries are $0$, and an extra column whose $(l-1)^{th}$ entry is $1$ and whose other entries are $0$. This is easily seen to have maximal rank.
\end{proof}

We are now in a position to complete the proof of Theorem \ref{thm:main} in this case. When $S_{m-d}\neq\emptyset$, it is easy to check that the number of pairs $(i, j)\in S_m$ satisfying $1\le i\le nr+1$ and $1\le j\le ns+1$ coincides with the upper bounds on $\dim B_2[m]$ given by Lemmas \ref{lem:rsdim1}, \ref{lem:rsdim2}, and \ref{lem:rsdim3}. So (recalling Proposition \ref{prop:nullbasis}) we have \begin{align*}\dim B_2[m] \leq |S_m\cap\{(i, j)\in\mathbb{Z}^2\mid 1\le i\le nr+1 \text{ and } 1\le j\le ns+1\}|\end{align*} for all $m$.  Thus the coefficients of the Hilbert-Poincar\'e series are bounded above by the coefficients of $(t^r+t^{2r}+\ldots+t^{(ns+1)r})(t^s+t^{2s}+\ldots+t^{(nr+1)s})$, which is equal to $\frac{(t^{s}-t^{d})(t^{r}-t^{d})}{(1-t^{s})(1-t^{r})}$, as required ($d=nrs+r+s$ in this case).


\subsection{$P_{\text{ab}}=y(\sum_{k=0}^{n}a_kx^{(n-k)r}y^{ks})$}\label{subsec:nrss}

The proof of Theorem \ref{thm:main} here is similar to its proof in Subsection \ref{subsec:nrs}. However, in this case, because we have lost symmetry between $x$ and $y$, we must consider four cases: $p\neq 0$ and $q\neq 0$, $p=0$ and $q\neq 0$, $p\neq 0$ and $q=0$, and $p=q=0$. 


\begin{lemma}\label{lem:sdim1}
If $S_{m-d}\neq\emptyset$, $p\neq 0$, and $q\neq 0$, then $\dim B_2[m]\le\max(n-l-1, 0)$.
\end{lemma}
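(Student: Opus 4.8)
The plan is to follow Case \ref{case:nrs1} of Lemma \ref{lem:dimbd} essentially verbatim, accounting only for the change in the exponent data caused by the factor of $y$ (i.e.\ $u=0$, $v=1$). Recall that here $u_k=(n-k)r$ and $v_k=1+ks$, so that $P_{ab}=\sum_k a_k x^{(n-k)r}y^{1+ks}$. Assume $S_{m-d}\neq\emptyset$ and write $S_{m-d}=\{(p_t,q_t)\}_{0\le t\le l}$ with $(p_t,q_t)=(p+(l-t)r,q+ts)$, $0\le p\le r-1$, $0\le q\le s-1$; here we are in the subcase $p\neq0$, $q\neq0$. Since $u+p=p\le r-1$ and $v+q=q+1\le s$, the set $S_m^+$ is exactly $\{(p+(n+l-i)r,\,q+1+is)\mid 0\le i\le n+l\}$, so $\dim B_2[m]\le |S_m^+|=n+l+1$ by Proposition \ref{prop:nullbasis}, and it spans $B_2[m]$. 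For each $t\in\{0,\ldots,l\}$, Proposition \ref{prop:bracketdep2} (applicable since $p_t,q_t>0$) gives two relations, for a total of $2l+2$ linear relations among these $n+l+1$ elements. This yields a $(2l+2)\times(n+l+1)$ matrix equation $AX=0$, where $A$ has exactly the same shape as in Case \ref{case:nrs1}, except that the denominators in the rows coming from the "$x$-exponent" relations are $u_k+p_t=p+(n+l-k-t)r$ (unchanged), while those coming from the "$y$-exponent" relations are $v_k+q_t=q+1+(k+t)s$ — i.e.\ uniformly one larger than in Case \ref{case:nrs1}, just as in Lemma \ref{lem:rsdim1}.

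Next I would run the same sequence of row and column operations as in Case \ref{case:nrs1}: scale the $i^{\text{th}}$ column by $(p+(n+l-i)r)(q+1+is)$ to clear all denominators; then for $0\le j\le l$ perform the same combination (scale the $j^{\text{th}}$ row by $r$, add $s$ times the $(j+l+1)^{\text{th}}$ row, scale by $m^{-1}$, subtract $p+lr$ times the result from row $j+l+1$, scale that by $r^{-1}$). Because the only change from Case \ref{case:nrs1} is a uniform shift of the $y$-type denominators by $1$ — and $q+1+is$ plays the role that $q+is$ played there — exactly the same arithmetic goes through (the relation $si+rj=m$ is used in the same way), and one arrives at precisely the matrix $B$ of Case \ref{case:nrs1}: the first $l+1$ rows are the staircase of $(a_0,\ldots,a_n)$ and the next $l+1$ are the staircase of $(na_0,(n-1)a_1,\ldots,0\cdot a_n)$. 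From here the conclusion is identical: setting $h=a_0x^n+\cdots+a_n$, which is square-free by Lemma \ref{lem:sqfree} so that $\operatorname{Res}(h,h')\neq0$, one observes that $\operatorname{Sylv}(h,h')$ is obtained from $B$ by adjoining zero columns and inserting rows when $l+1\le n-1$ (giving row-independence), while for $l+1\ge n$ the top $(n+l+1)\times(n+l+1)$ minor equals $\pm a_n^{l-n+2}\operatorname{Res}(h,h')\neq0$ (giving column-independence). Either way $A$ has maximal rank $\min(2l+2,n+l+1)$, so $\dim B_2[m]\le (n+l+1)-(2l+2)=n-l-1$ when $l\le n-1$ and $\dim B_2[m]=0$ otherwise, i.e.\ $\dim B_2[m]\le\max(n-l-1,0)$.

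Since the proof is an almost mechanical transcription of Case \ref{case:nrs1}, there is no serious obstacle; the only point requiring a moment's care is checking that the uniform $+1$ shift in the $y$-denominators does not disturb the row operations — specifically that after scaling column $i$ by $(p+(n+l-i)r)(q+1+is)$ and carrying out the prescribed row combinations, the identity $si+rj=m$ (with the new value of $m$) still produces the cancellations that collapse the two blocks of denominators to the constants giving $B$. This is the same check performed implicitly in Lemma \ref{lem:rsdim1}, so I would simply remark that it goes through identically and refer the reader there and to Case \ref{case:nrs1} rather than reproduce the computation.
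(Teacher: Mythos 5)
Your proposal is correct and takes essentially the same route as the paper, whose entire proof of Lemma \ref{lem:sdim1} is the remark that it is ``essentially identical to the proof of Case \ref{case:nrs1} of Lemma \ref{lem:dimbd}''; you have simply made explicit the one point that needs checking, namely that the uniform $+1$ shift in the $y$-denominators is absorbed by the same column scalings and row combinations (since $r(q+1+is)+s(p+(n+l-i)r)$ still equals $m$ independently of $i$), yielding the same matrix $B$ and the same bound $\max(n-l-1,0)$.
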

\begin{proof}
This is essentially identical to the proof of Case \ref{case:nrs1} of Lemma \ref{lem:dimbd}.
\end{proof}

\begin{lemma}\label{lem:sdim2}
If $S_{m-d}\neq\emptyset$, $p=0$, and $q\neq 0$, then $\dim B_2[m]\le\max(n-l-1, 0)$.
\end{lemma}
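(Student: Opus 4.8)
The plan is to follow the proof of Case \ref{case:nrs2} of Lemma \ref{lem:dimbd} essentially verbatim; the only structural change is that here $v=1$ (and $d=r+nrs$) rather than $v=0$ (and $d=nrs$). First I would pin down $S_m^+$: since $u=p=0$ we have $u+p=0$, so the $i=n+l$ term of $\{(u+p+(n+l-i)r,\,v+q+is)\mid 0\le i\le n+l\}$ has $x$-coordinate $0$ and drops out, while $v+q\ge 2$ keeps all $y$-coordinates positive; hence $|S_m^+|=n+l$, and by Proposition \ref{prop:nullbasis} the space $B_2[m]$ is spanned by the $n+l$ elements $[x^i,y^j]$ with $(i,j)\in S_m^+$.

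Next I would collect the relations of Proposition \ref{prop:bracketdep2}. With $p_t=(l-t)r$ and $q_t=q+ts$ we have $p_t>0$ for $t=0,\dots,l-1$ and $q_t>0$ for $t=0,\dots,l$, giving $l$ relations of ``$x$-type'' and $l+1$ of ``$y$-type'', i.e.\ a matrix equation $AX=0$ with $X$ the vector of the $n+l$ spanning elements and $A$ of size $(2l+1)\times(n+l)$. If $A$ has maximal rank, then $\dim B_2[m]\le\max\bigl((n+l)-(2l+1),\,0\bigr)=\max(n-l-1,0)$, which is the claim; so the lemma reduces to showing $A$ has maximal rank.

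To do that I would compare $A$ with the matrix from Case \ref{case:nrs2}. At column $i$, the $x$-type rows carry the denominators $u_k+p_t=(n+l-i)r$, exactly as there, while the $y$-type rows carry $v_k+q_t=1+q+is$ — precisely the Case \ref{case:nrs2} denominators with $q$ replaced by $q+1$. I would therefore run the same reduction: scale column $i$ by $(n+l-i)r\,(q+1+is)$, then for each $j$ scale the $j$-th ($x$-type) row by $r$, add $s$ times the paired $y$-type row, divide by the common degree $m=r\bigl(q+1+(n+l)s\bigr)$ of the elements of $S_m^+$, subtract $lr$ times the resulting row from the $y$-type row, and divide by $r$; the unpaired $y$-type row $t=l$ and the truncated last column are cleared by the same scalings as in Case \ref{case:nrs2}. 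The one identity to verify is that $r(q+1+is)+sr(n+l-i)=m$ holds for all $i$, which is the case because the extra $r$ coming from $q\mapsto q+1$ is exactly absorbed by the change of $d$ from $nrs$ to $r+nrs$. Granting this, the reduction yields the very same matrix $B$ assembled from the coefficient vectors of $h=a_0x^n+\dots+a_n$ and $h'$. Since $h$ is square-free by Lemma \ref{lem:sqfree}, $\operatorname{Res}(h,h')\ne 0$, and the rank computation of Case \ref{case:nrs2} (rows independent when $l\le n-1$, a nonvanishing $(n+l)\times(n+l)$ minor when $l\ge n$) shows $B$, hence $A$, has maximal rank.

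The only genuine obstacle is this short bookkeeping check; once one observes that replacing $q$ by $q+1$ in the $y$-relations is compensated by replacing $d=nrs$ with $d=r+nrs$, the present lemma is word-for-word Case \ref{case:nrs2}, and nothing further is required.
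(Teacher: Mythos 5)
Your proposal is correct and matches the paper's approach: the paper's proof of this lemma is literally ``essentially identical to the proof of Case \ref{case:nrs2} of Lemma \ref{lem:dimbd},'' and you have carried out exactly that reduction, supplying the bookkeeping (the count $|S_m^+|=n+l$, the $(2l+1)\times(n+l)$ shape of $A$, and the verification that the shift $q\mapsto q+1$ in the $y$-type denominators is absorbed by the new value of $d$) that the paper leaves implicit. Nothing further is needed.
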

\begin{proof}
This is essentially identical to the proof of Case \ref{case:nrs2} of Lemma \ref{lem:dimbd}.
\end{proof}

\begin{lemma}\label{lem:sdim3}
If $S_{m-d}\neq\emptyset$, $p\neq 0$, and $q=0$, then $\dim B_2[m]\le\max(n-l, 0)$.
\end{lemma}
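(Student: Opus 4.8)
The plan is to mimic the structure of Case \ref{case:nrs3} of Lemma \ref{lem:dimbd} (the $p=q=0$ subcase there), adjusting for the loss of $x$--$y$ symmetry introduced by the factor $y$ in $P_{ab}=y\bigl(\sum_{k=0}^{n}a_kx^{(n-k)r}y^{ks}\bigr)$, i.e. by $u=0$, $v=1$. With $p\neq 0$ and $q=0$ we have $u+p=p>0$ but $v+q=1>0$ as well, so in fact $S_m^{+}=\{(u+p+(n+l-i)r,\,v+q+is)\mid 0\le i\le n+l\}$ has exactly $n+l+1$ elements: unlike Case \ref{case:nrs3}, neither endpoint index is lost because the shift by $v=1$ keeps the $y$-exponent positive even when $q=0$. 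First I would write down the relations supplied by Proposition \ref{prop:bracketdep2}: for each $t\in\{0,\dots,l\}$ we have $p_t=p+(l-t)r>0$, giving an $x$-relation; and $q_t=q+ts=ts>0$ precisely for $t\ge 1$, giving a $y$-relation for $t\in\{1,\dots,l\}$. That is $l+1$ relations of the first kind and $l$ of the second, for a total of $2l+1$ linear equations $AX=0$ in the $n+l+1$ spanning elements $[x^{u_k+p_t},y^{v_k+q_t}]$.

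Next I would reduce $A$ to a combinatorial normal form $B$ by the same sequence of row- and column-scalings used in Lemma \ref{lem:dimbd}: scale the $i^{\text{th}}$ column by $(u+p+(n+l-i)r)(v+q+is)$ to clear all the denominators $u_k+p_t$ and $v_k+q_t$, then combine each $x$-relation row with its partner $y$-relation row (scaling by $r$, adding $s$ times the paired row, scaling by $m^{-1}$, subtracting, rescaling by $r^{-1}$) to separate the two bands. The first $l+1$ rows should become translates of the coefficient vector $(a_0,\dots,a_n)$, and the remaining $l$ rows should become translates of a vector of the form $(c_0a_0,\dots,c_na_n)$ where the $c_k$ are the linear-in-$k$ weights coming from the $y$-derivative; because $q=0$ the bottom $y$-relation row ($t=l$) has no $x$-partner, but — exactly as in Case \ref{case:nrs2} — it can still be brought to the translated vector form by pure scaling. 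The upshot is a matrix $B$ of size $(2l+1)\times(n+l+1)$ built from a Sylvester-type stacking of $(a_0,\dots,a_n)$ and its "derivative" vector.

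The key input is then Lemma \ref{lem:sqfree}, which guarantees that $h=a_0x^n+\dots+a_n$ is square-free, hence $\operatorname{Res}(h,h')\neq 0$. As in the earlier cases I would split on the size of $l$: when $l$ is small ($l\le n-1$, say) one recovers $\operatorname{Sylv}(h,h')$ from $B$ by appending zero columns and inserting rows, so the rows of $B$ are independent and $\operatorname{rank}B=2l+1$, giving $\dim B_2[m]\le (n+l+1)-(2l+1)=n-l$; when $l$ is large one exhibits an $(n+l+1)\times(n+l+1)$ — wait, here $B$ has only $2l+1$ rows, so for large $l$ one instead exhibits a nonzero $(n+l+1)\times(n+l+1)$ minor after observing $2l+1\ge n+l+1$, i.e. $l\ge n$, by picking the appropriate $n+l+1$ rows and computing the determinant as $\pm a_n^{\,?}\operatorname{Res}(h,h')\neq 0$, so the columns are independent and $\dim B_2[m]\le 0$. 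Either way $\dim B_2[m]\le\max(n-l,0)$, as claimed. The main obstacle I anticipate is purely bookkeeping: tracking exactly which endpoint columns and which unpaired bottom row survive the shift by $v=1$, and verifying the exponent on $a_n$ in the large-$l$ determinant — but these are the same mechanical verifications already carried out in Lemma \ref{lem:dimbd}, so I would simply point to that proof and note the one-line differences, in the same terse style as Lemmas \ref{lem:rsdim1}--\ref{lem:rsdim3}.
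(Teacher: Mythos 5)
Your overall strategy is the paper's: the paper disposes of this case by observing that it is the mirror image (under $x\leftrightarrow y$) of Lemma \ref{lem:rsdim2}, and your direct set-up reproduces exactly that computation --- a $(2l+1)\times(n+l+1)$ relation matrix whose maximal rank gives $\dim B_2[m]\le (n+l+1)-\min(2l+1,\,n+l+1)=\max(n-l,0)$. Your counts are right: $u+p=p>0$ and $v+q=1>0$ give $|S_m^+|=n+l+1$ columns, and there are $l+1$ relations from $p_t>0$ plus $l$ from $q_t>0$.

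Two of your bookkeeping claims are wrong, however, and the second one is doing real work. First, the unpaired row is not ``the bottom $y$-relation row ($t=l$)'': since $q_t=ts$ vanishes only at $t=0$ while $p_t=p+(l-t)r$ never vanishes, it is the $t=0$ $x$-relation that lacks a partner (this already contradicts your own count of $l+1$ $x$-rows versus $l$ $y$-rows). Second, that unpaired row cannot ``be brought to the translated vector form by pure scaling.'' In Case \ref{case:nrs2} that worked because $u=v=0$; here $v=1$, and after scaling column $i$ by $(u+p+(n+l-i)r)(v+q+is)$ the $t=0$ row has entries $a_i(1+is)$, which equals the nontrivial combination $(1+ns)(a_0,\dots,a_n)-s(na_0,\dots,0a_n)$ of the two $t=0$ band vectors, not a scalar multiple of either. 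This is exactly the wrinkle the paper absorbs into the ``adjoin an extra column with a single nonzero entry'' device of Lemmas \ref{lem:rsdim2}--\ref{lem:rsdim3}, and which it avoids rederiving here by invoking the $x\leftrightarrow y$ symmetry. The rank conclusion survives: for $l\le n-1$ your $2l+1$ rows are $2l+1$ linearly independent combinations of the $2l+2$ independent shifted copies of $(a_0,\dots,a_n)$ and $(na_0,\dots,0a_n)$ sitting inside $\operatorname{Sylv}(h,h')$, hence independent; and for $l\ge n$ a nonzero maximal minor can still be extracted. But as written your reduction does not produce the matrix $B$ whose structure your Sylvester argument relies on, so this step needs to be repaired (most cleanly by doing what the paper does and quoting Lemma \ref{lem:rsdim2} after swapping $x$ and $y$).
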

\begin{proof}
This is essentially identical to the proof of Lemma \ref{lem:rsdim2}, or more strictly, the case $(u,v)=(1,1)$, $p\neq 0,q=0$ which may be proved as Lemma \ref{lem:rsdim2} by switching the roles of $x$ and $y$.
\end{proof}

\begin{lemma}\label{lem:sdim4}
If $S_{m-d}\neq\emptyset$ and $p=q=0$, then $\dim B_2[m]\le\max(n-l, 0)$.
\end{lemma}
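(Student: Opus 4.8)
The plan is to follow the template of Cases~\ref{case:nrs1}--\ref{case:nrs3}. Here $(u,v)=(0,1)$, so $u_k=(n-k)r$ and $v_k=1+ks$, and we take $p=q=0$; the case $l=0$ is immediate, so assume $l\ge1$. First I would record the combinatorics. Since $u+p=0$, the putative index $i=n+l$ gives a vanishing $x$-exponent, so $S_m^{+}=\{((n+l-i)r,\,1+is)\mid 0\le i\le n+l-1\}$ has exactly $n+l$ elements, and $B_2[m]$ is spanned by the corresponding brackets. Proposition~\ref{prop:bracketdep2} then supplies $2l$ relations among them: a type-$P$ relation for each $t=0,\dots,l-1$ (all of whose terms survive in $B_2$, since $u_k+p_t=(n+l-k-t)r\ge r$ and $v_k+q_t=1+(k+t)s\ge1$) and a type-$Q$ relation for each $t=1,\dots,l$ (only the $t=l$ one losing a term, $k=n$, which in any case carries coefficient $0$ after the reductions below). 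This gives $AX=0$ with $A$ of size $(2l)\times(n+l)$, and it suffices to show $A$ has maximal rank.

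Next I would run the usual row/column operations: scale column $i$ by $(n+l-i)r(1+is)$, and for each $t=1,\dots,l-1$ replace the type-$P_t$ row by $\tfrac1m\big(r\cdot(\text{type-}P_t)+s\cdot(\text{type-}Q_t)\big)$ (with $m=r(1+(n+l)s)$), producing the row carrying $a_0,\dots,a_n$ in columns $t,\dots,t+n$; subtracting $(l-t)r$ times this from the type-$Q_t$ row and rescaling turns that row into the one carrying $(n-k)a_k$ ($k=0,\dots,n-1$) in columns $t,\dots,t+n-1$, and the unpaired type-$Q_l$ row reaches the same shape at offset $l$ by rescaling alone. The genuinely new feature is the unpaired type-$P_0$ row: after column scaling it has entry $a_k(1+ks)$ in column $k$, which is \emph{not} a clean coefficient band. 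With $\hat h(x):=\sum_{k=0}^{n}a_kx^k$ (square-free by Lemma~\ref{lem:sqfree}, so $\gcd(\hat h,\hat h')=1$) and $\hat g:=n\hat h-x\hat h'=\sum_{k=0}^{n-1}(n-k)a_kx^{k}$, and identifying a row $(c_i)_i$ with $\sum_ic_ix^i$, the identity $a_k(1+ks)=(1+ns)a_k-s(n-k)a_k$ shows this row equals $R_0:=\hat h+sx\hat h'$. So after reduction the rows of the equivalent matrix $B$ are $R_0$, the polynomials $x^j\hat h$ for $1\le j\le l-1$, and the polynomials $x^j\hat g$ for $1\le j\le l$.

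To bound $\operatorname{rank}B$ I would factor through the Sylvester-type map $\Phi$ sending $(u,w)$ with $\deg u,\deg w\le l$ to $u\hat h+xw\hat h'$: the rows of $B$ are $\Phi$ applied to the preimages $(1,s)$, $(x^j,0)$ $(1\le j\le l-1)$, and $(nx^j,-x^{j})$ $(1\le j\le l)$ (using $x^j\hat g=nx^j\hat h-x^{j+1}\hat h'$). These $2l$ preimages are linearly independent: the second coordinate of a dependence reads $\alpha s-\sum_j\gamma_jx^j=0$, forcing $\alpha=0$ and every $\gamma_j=0$, and then the first coordinate forces every $\beta_j=0$. Hence $\operatorname{rank}B=2l-\dim\!\big(\langle\text{preimages}\rangle\cap\ker\Phi\big)$. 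Since $\gcd(\hat h,x\hat h')=1$ (as $\hat h(0)=a_0\ne0$), one computes $\ker\Phi=\{(xu_1\hat h',-u_1\hat h)\mid\deg u_1\le l-n\}$, which is $0$ for $l<n$ and has dimension $l-n+1$ for $l\ge n$. Finally, the linear functional $\lambda(u,w):=s\,u(0)-w(0)$ annihilates every preimage, hence all of $\langle\text{preimages}\rangle$, whereas $\lambda\big(x\hat h',-\hat h\big)=a_0\ne0$; since $(x\hat h',-\hat h)\in\ker\Phi$ when $l\ge n$, this shows $\langle\text{preimages}\rangle\cap\ker\Phi$ is a proper subspace of $\ker\Phi$, so its dimension is at most $\max(0,l-n)$. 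Therefore $\operatorname{rank}B\ge 2l-\max(0,l-n)=\min(2l,n+l)$, and so $\dim B_2[m]\le(n+l)-\min(2l,n+l)=\max(n-l,0)$, as claimed.

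The crux is the unpaired type-$P_0$ relation. In Cases~\ref{case:nrs1}--\ref{case:nrs3} the relations either pair off or reduce to honest coefficient bands, whereas here $R_0=\hat h+sx\hat h'$ is an essential mixture of an $\hat h$-band and an $\hat h'$-band at offset $0$. What rescues the argument is that $R_0$ still lies in the image of $\Phi$---so the row space of $B$ stays inside the span of $\{u\hat h+xw\hat h'\}$---and that the single functional $\lambda$ separates $\langle\text{preimages}\rangle$ from the one dangerous kernel direction $(x\hat h',-\hat h)$; everything else is the same degree-and-$\gcd$ bookkeeping as in the earlier cases.
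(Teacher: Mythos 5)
Your proof is correct. It shares the paper's setup --- the $(2l)\times(n+l)$ system coming from Proposition \ref{prop:bracketdep2}, the column scaling by $(n+l-i)r(1+is)$, and the pairing of the type-$P_t$ and type-$Q_t$ rows for $1\le t\le l-1$ --- but it verifies maximal rank by a genuinely different mechanism. The paper's proof is a one-line claim that $A$ is row/column equivalent to the matrix $B$ of Case \ref{case:nrs3} of Lemma \ref{lem:dimbd} with one extra column adjoined, which implicitly asserts that the unpaired $P_0$ row can be brought to the pure band $(a_1,2a_2,\ldots,na_n)$ plus a single new entry. As you point out, that row is really $\hat h+sx\hat h'$, and since the offset-$0$ copy of $\hat h$ is not in the row space (there is no $Q_0$ to pair it with), that cleanup is not actually available; your description of the row space as spanned by $R_0$, the $x^j\hat h$, and the $x^j\hat g$ is the honest one. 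Your substitute for the Sylvester/resultant determinant --- factoring the rows through $\Phi(u,w)=u\hat h+xw\hat h'$, computing $\ker\Phi$ from $\gcd(\hat h,x\hat h')=1$, and using the functional $\lambda(u,w)=s\,u(0)-w(0)$ to show the span of the preimages misses the kernel direction $(x\hat h',-\hat h)$ --- is correct in all the edge cases ($l=1$, $l=n$, $l>n$) and gives exactly $\operatorname{rank}A=\min(2l,n+l)$, hence $\dim B_2[m]\le\max(n-l,0)$. Both arguments ultimately rest on the same input, the square-freeness of $\hat h$ from Lemma \ref{lem:sqfree}; the paper's version buys brevity by reusing the earlier explicit minors, while yours buys rigor precisely at the point where the paper's stated matrix equivalence is imprecise, and is arguably the cleaner way to handle the one genuinely new feature of this case, the mixed row $R_0$.
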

\begin{proof}
This is midway between Case \ref{case:nrs3} of Lemma \ref{lem:dimbd} and Lemma \ref{lem:rsdim3}. The relevant matrix $A$ is equivalent to the matrix which is obtained from the matrix $B$ of Case \ref{case:nrs3} of Lemma \ref{lem:dimbd} by adjoining an extra column whose $(l-1)^{th}$ entry is $1$ and whose other entries are $0$. This is easily seen to have maximal rank.\end{proof}

We are now in a position to complete the proof of Theorem \ref{thm:main} in this case. When $S_{m-d}\neq\emptyset$, it is easy to check that the number of pairs $(i, j)\in S_{m}$ satisfying $1\le i\le nr-1$ and $1\le j\le ns+1$ coincides with the upper bounds on $\dim B_2[m]$ given by Lemmas \ref{lem:sdim1}, \ref{lem:sdim2}, \ref{lem:sdim3}, and \ref{lem:sdim4} , in all cases except $m=d$.  When $m=d$, that number is one less than the bound for $\dim B_2[d]$. We conclude that \begin{align*}\dim B_2[m]\leq |S_m\cap(\{(i, j)\in\mathbb{Z}^2\mid 1\le i\le nr-1 \text{ and } 1\le j\le ns+1\}\cup\{(nr,1)\})|.\end{align*}  Therefore the coefficients of the Hilbert-Poincar\'e series are bounded above by the coefficients of $t^{d}+(t^s+t^{2s}+\ldots+t^{(nr-1)s})(t^r+t^{2r}+\ldots+t^{(ns+1)r})$, which is equal to $\frac{(t^{s}-t^{d})(t^{r}-t^{d})}{(1-t^{s})(1-t^{r})}$, as required ($d=nrs+r$ in this case).

\section{Directions for Future Work}\label{sec:conc}

One might start with free algebras on more variables, consider quotients by multiple relations, and examine of $B_i$ for $i\ge3$; references (M. Balagovic, A. Balasubramanian, 2011), (G. Dobrowolska, J. Kim, X. Ma, 2008), and (B. Feigin, B. Shoikhet, 2006) discuss these, and it may be possible to adapt the methods of the current paper to these more general settings. One might also work over rings other than $\mathbb{C}$, such as $\mathbb{Z}$; $\mathbb{C}$ is sufficient for computing ranks, but in general the $B_i$ will have torsion. This is discussed by Cordwell, Fei and Zhou (K. Cordwell, T. Fei, K. Zhou, 2015), and the references therein.

\section{Acknowledgments}
The authors thank Prof. Pavel Etingof of MIT for introducing them to this area of research, suggesting the problem, and for his subsequent time and advice. The authors thank RSI and the MIT mathematics department for supporting them in this research.

\end{document}